\documentclass[11pt,a4paper]{amsart}

\usepackage[english]{babel}
\usepackage{amsmath}
\usepackage{amssymb}
\usepackage{amsthm}
\usepackage{latexsym}
\usepackage{mathtools}
\usepackage{thmtools}
\usepackage{amsfonts}
\usepackage{mathrsfs}
\usepackage{textcomp}
\usepackage{graphicx}
\usepackage{setspace}
\usepackage{nicefrac}
\usepackage{indentfirst}
\usepackage{enumerate}
\usepackage{wasysym}
\usepackage[normalem]{ulem}
\usepackage{upgreek}
\usepackage{paralist}
\usepackage{xcolor}
\usepackage{etoolbox}
\usepackage{mathdots}
\usepackage{wrapfig}
\usepackage{floatflt}
\usepackage{tensor} 
\usepackage{parskip}
\usepackage{lscape}
\usepackage{stmaryrd}
\usepackage[enableskew]{youngtab}
\usepackage{ytableau}
\usepackage{pifont}
\usepackage{braket}
\usepackage{derivative}

\usepackage[all,cmtip]{xy}
\usepackage[labelfont=bf, font={small}]{caption}[2005/07/16]

\usepackage{tikz}
\usetikzlibrary{cd}
\usetikzlibrary{arrows}
\usetikzlibrary{matrix}
\usetikzlibrary{positioning}
\usetikzlibrary{calc}
\usetikzlibrary{fit}

\usepackage{pgfplots}
\pgfplotsset{compat=1.17}

\tikzset{%
  highlight/.style={rectangle,rounded corners,fill=red!15,draw,
    fill opacity=0.5,thick,inner sep=0pt}
}

\usepackage[breaklinks,bookmarks=false, hidelinks]{hyperref} 
\hypersetup{
    colorlinks,
    linkcolor={blue!50!black},
    citecolor={green!50!black},
    urlcolor={blue!80!black}
}%

\newcommand{\xto}[1]{\xrightarrow{\phantom{a}{#1}{\phantom{a}}}}

\newcommand{\vvirg}{ , \dots , }

\newcommand{\contract}{\rotatebox[origin=c]{180}{ \reflectbox{$\neg$} }}



\newcommand{\bfT}{\mathbf{T}}





\newcommand{\bbC}{\mathbb{C}}

\newcommand{\bbL}{\mathbb{L}}

\newcommand{\bbP}{\mathbb{P}}

\newcommand{\bbU}{\mathbb{U}}

\newcommand{\bbX}{\mathbb{X}}
\newcommand{\bbY}{\mathbb{Y}}
\newcommand{\bbZ}{\mathbb{Z}}



\newcommand{\frakS}{\mathfrak{S}}



\newcommand{\rmR}{\mathrm{R}}




\renewcommand{\phi}{\varphi}

\renewcommand{\bar}[1]{\overline{#1}}

\newcommand{\rank}{\mathrm{rank}}

\DeclareMathOperator{\codim}{codim}

\DeclareMathOperator{\Sym}{Sym}

\DeclareMathOperator{\Ann}{Ann}

\newcommand{\GL}{\mathrm{GL}}
\newcommand{\SL}{\mathrm{SL}}



\renewcommand{\HF}{h}

\newcommand{\cat}{\mathrm{cat}}

\newtheorem{theorem}{Theorem}[section]

\newtheorem{lemma}[theorem]{Lemma}
\newtheorem{proposition}[theorem]{Proposition}

\theoremstyle{definition}

\usepackage[nameinlink]{cleveref}
\crefname{theorem}{theorem}{theorems}
\Crefname{theorem}{Theorem}{Theorems}
\crefname{lemma}{lemma}{lemmas}
\Crefname{lemma}{Lemma}{Lemmas}
\crefname{proposition}{proposition}{propositions}
\Crefname{proposition}{Proposition}{Propositions}
\crefname{corollary}{corollary}{corollaries}
\Crefname{corollary}{Corollary}{Corollaries}
\crefname{definition}{definition}{definitions}
\Crefname{definition}{Definition}{Definitions}
\crefname{remark}{remark}{remarks}
\Crefname{remark}{Remark}{Remarks}
\crefname{example}{example}{examples}
\Crefname{example}{Example}{Examples}
\crefname{examplex}{example}{examples}
\Crefname{examplex}{Example}{Examples}

\usepackage[top=3cm,bottom=3cm,left=3cm,right=3cm]{geometry}

\renewcommand{\phi}{ \varphi }
\newcommand{\ev}{\mathrm{ev}}
\newcommand{\reg}{\mathrm{reg}}

\usepackage{color}

\title{Waring decompositions of special binomials}
\author{Luca Chiantini}

\author{Fulvio Gesmundo}

\author{Sara Marziali}

\address[L. Chiantini, S. Marziali]{Dipartimento di Ingegneria dell'Informazione e Scienze Matematiche, Universit\`a di Siena, Italy}

\address[F. Gesmundo]{Institut de Mathématiques de Toulouse; UMR5219 -- Université de Toulouse; CNRS -- UPS, F-31062 Toulouse Cedex 9, France}

\email[(Chiantini)]{luca.chiantini@unisi.it}
\email[(Gesmundo)]{fgesmund@math.univ-toulouse.fr}
\email[(Marziali)]{sara.marziali@student.unisi.it}

\keywords{Waring rank, Waring decomposition, Hilbert-Burch matrix, liaison}
\subjclass{14N07, 14N05, 15A69}

\begin{document}

\begin{abstract}
We determine the Waring rank of homogeneous polynomials of the form $x^ky^kz^k + \ell^{3k}$ where $\ell$ is a linear form. The result is based on the study of the Hilbert function and the resolution of special configurations of points in $\bbP^2$. As a byproduct of our result, we show that the monomial $x^ky^kz^k$ does not have irredundant decompositions of length $(k+1)^2 +1$. 
\end{abstract}

\maketitle

\section{Introduction}

The Waring rank is a fundamental invariant of homogeneous polynomials, studied for almost 200 years. For a homogeneous polynomial $f$ of degree $d$, the \emph{Waring rank} of $f$, denoted $\rmR(f)$, is the smallest integer $r$ such that $f$ can be written as a linear combination of powers of linear polynomials $\ell_1 \vvirg \ell_r$. Over the field of complex numbers, one has a natural geometric translation: the Waring rank of $f$ is the smallest $r$ such that the point $[f]$ in a corresponding projective space lies on an $r$-secant plane to the $d$-th Veronese variety. The set of linear forms $\{ \ell_1 \vvirg \ell_r\}$, or equivalently the corresponding set of points in projective space, is called a minimal \emph{Waring decomposition} of $f$.

Determining Waring rank for a given polynomial is considered an extremely hard problem. Formal hardness results are proved in \cite{HilLim:TensorsNPhard,Shitov16} and more generally the geometric formulation often translates the problem of determining Waring decompositions into challenging problems in determining ideals of points with subtle properties. The value of the Waring rank for generic polynomials was determined in \cite{AlHir}, and \cite{GalMel} completes the classification of the cases where a generic polynomial has a unique minimal Waring decomposition. Generic elements of Waring rank smaller than the generic rank have a unique decomposition except for few cases \cite{ChiOttVan}, and for small Waring rank the unique decomposition can be recovered efficiently using, for instance, eigenvalue methods, see, e.g. \cite{BCMT10} and \cite[Section 6]{gesmundo2025hilbert}. Understanding the set of decompositions in the cases where the decomposition is not unique is considered hard even for generic polynomials: partial results are known in small degree and number of variables \cite{RanSch:VSPs,MasMelVSPS,RanVoiVSPS}. For special polynomials, the problem is difficult as well: the Waring rank and some properties of Waring decompositions are known only in very special cases, and were determined with specific methods \cite{TeitWoo,Lee16,CCCGW15,ABC22}. An important example is the case of monomials: \cite{CCG12} determined the Waring rank of monomials and \cite{BBT13} provides a characterization of all minimal Waring decompositions, see~\Cref{prop: monomial results}.

In this work, we study the case of polynomials of a special form, that is $f = (x_0 \cdots x_n)^k + \ell^{(n+1)k}$ where $\ell$ is a linear form. We focus on the cases $n=1,2$. The case $n=2$ is already interesting: the study of the Hilbert function of a potential decomposition, together with the results of \cite{CCG12,BBT13}, allows us to obtain the following result.

\begin{theorem}\label{thm: main}
    Let $k \geq 2$. Let $f = x_0^kx_1^kx_2^k + \lambda \ell^{3k}$ for a linear form $\ell = a_0 x_0 + a_1 x_1 + a_2 x_2$ and $\lambda \neq 0$. Then 
    \begin{itemize}
        \item If $a_0a_1a_2 \neq 0$ there is a unique $\lambda_0$ such that $\rmR(f) = (k+1)^2-1$ if $\lambda = \lambda_0$ and $\rmR(f) = (k+1)^2$ if $\lambda \neq \lambda_0$; moreover, for any $\lambda$, $f$ has a unique minimal decomposition.
        \item If $a_0a_1a_2 = 0$ then $\rmR(f) = (k+1)^2 + 1$.
    \end{itemize}
\end{theorem}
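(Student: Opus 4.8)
Work throughout with apolarity: for a form $g$ of degree $d$ with apolar ideal $g^{\perp}\subseteq S:=\bbC[X_0,X_1,X_2]$, a reduced finite set $X\subset\bbP^2$ underlies a Waring decomposition of $g$ if and only if $I(X)\subseteq g^{\perp}$, and a shortest such $X$ computes $\rmR(g)$. Write $m=x_0^kx_1^kx_2^k$, so $m^{\perp}=(X_0^{k+1},X_1^{k+1},X_2^{k+1})$; by \Cref{prop: monomial results}, $\rmR(m)=(k+1)^2$ and every apolar set of $m$ of that length is, after permuting coordinates, a reduced complete intersection $V(X_0^{k+1}-\alpha X_2^{k+1},\,X_1^{k+1}-\beta X_2^{k+1})$ with $\alpha,\beta\in\bbC^{*}$, which I call a \emph{grid}. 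Two facts are used constantly: every point of a grid has all three coordinates nonzero (they are $(k+1)$-st roots of $\alpha$, $\beta$, $1$), so there is exactly one grid through a point with all coordinates nonzero and none through a point on a coordinate line; and, since $\rmR(\ell^{3k})=1$, subadditivity gives $(k+1)^2=\rmR(m)\le\rmR(f)+1$, hence $\rmR(f)\ge(k+1)^2-1$ in all cases, while trivially $\rmR(f)\le(k+1)^2+1$.

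I first reduce both parts to one assertion about $m$. Let $X$ be apolar to $f$ with $|X|\le(k+1)^2$; since $X$ is minimal the decomposition is irredundant, so the powers $\ell_p^{3k}$ ($p\in X$) are linearly independent. If $[\ell]\in X$ then $m=f-\lambda\ell^{3k}$ is supported on $X$, so $\rmR(m)\le|X|$: this is impossible for $|X|<(k+1)^2$, and for $|X|=(k+1)^2$ it forces $X$ to be a grid---hence $X=Y_0$, the unique grid through $[\ell]$, when $[\ell]$ has all coordinates nonzero, and a contradiction when $[\ell]$ lies on a coordinate line. If $[\ell]\notin X$, put $Z=X\cup\{[\ell]\}$, so $m=\sum_{p\in Z}c_p'\ell_p^{3k}$ with all $c_p'\ne0$. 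If $|Z|=(k+1)^2$ this set is a grid through $[\ell]$, hence impossible on a coordinate line and equal to $Y_0$ otherwise (which pins $X=Y_0\setminus\{[\ell]\}$ and all coefficients). If $|Z|=(k+1)^2+1$, the decomposition $m=\sum_{p\in Z}c_p'\ell_p^{3k}$ is in fact irredundant: otherwise $m$ is supported on some $Z'\subsetneq Z$ with $|Z'|=(k+1)^2$, so $Z'$ is a grid and is therefore already cut out by two forms of degree $k+1$; comparing the two expressions for $m$ yields a point $q\in Z\setminus Z'$ with $\ell_q^{3k}\in\langle\ell_p^{3k}:p\in Z'\rangle$, i.e.\ $q\in V(I(Z')_{3k})=Z'$, a contradiction. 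So everything hinges on:
\textbf{Lemma A.} \emph{The monomial $x_0^kx_1^kx_2^k$ has no irredundant Waring decomposition of length $(k+1)^2+1$.}

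Granting Lemma A, the theorem is bookkeeping. If $a_0a_1a_2\ne0$, let $Y_0$ be the unique grid through $[\ell]=[a_0:a_1:a_2]$ and expand the classical decomposition $m=\sum_{p\in Y_0}c_p\ell_p^{3k}$, whose coefficients are explicit nonzero scalar multiples of $(p_0p_1)^{-k}$; setting $\lambda_0:=-c_{[\ell]}\ne0$ we get $f=\sum_{p\in Y_0\setminus\{[\ell]\}}c_p\ell_p^{3k}+(\lambda-\lambda_0)\ell^{3k}$, so $\rmR(f)\le(k+1)^2-1$ when $\lambda=\lambda_0$ and $\rmR(f)\le(k+1)^2$ otherwise. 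Together with the case analysis above---which forbids $|X|=(k+1)^2-1$ unless $\lambda=\lambda_0$, shows any apolar set of size $(k+1)^2$ equals $Y_0$, and (through Lemma A) excludes $[\ell]\notin X$ with $|X|=(k+1)^2$---this gives the stated ranks and the uniqueness of the minimal decomposition. If $a_0a_1a_2=0$, say $a_2=0$, then $[\ell]$ lies on the line $\{X_2=0\}$, so by the case analysis $f$ admits no apolar set of size $\le(k+1)^2$; hence $\rmR(f)\ge(k+1)^2+1$, with equality by the trivial bound.

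It remains to prove Lemma A, the crux, where Hilbert--Burch and liaison enter. Let $Z$ be reduced with $|Z|=(k+1)^2+1$ carrying an irredundant decomposition $m=\sum_{p\in Z}c_p\ell_p^{3k}$: then $I(Z)\subseteq(X_0^{k+1},X_1^{k+1},X_2^{k+1})$ and the Hilbert function $h_Z$ of $S/I(Z)$ satisfies $h_Z(3k)=|Z|$ and $h_Z(t)\ge h_{S/m^{\perp}}(t)$ for every $t$. Applying $X_j^{k+1}\in m^{\perp}$ to the decomposition gives $\sum_{p\in Z}c_p\,p_j^{k+1}\,\ell_p^{2k-1}=0$ for $j=0,1,2$. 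Since $|Z|>(k+1)^2$ exceeds the length of any complete intersection of two forms of degree $k+1$, and two linearly independent forms in $\langle X_0^{k+1},X_1^{k+1},X_2^{k+1}\rangle$ have no common factor, one has $\dim I(Z)_{k+1}\le1$; equivalently the $|Z|\times3$ matrix $(p_j^{k+1})_{p,j}$ has rank $\ge2$. If the rank is $1$ then all of $Z$ lies in a single fibre of $[x_j]\mapsto[x_j^{k+1}]$, which has at most $(k+1)^2$ points---impossible; if it is $2$, then $Z$ lies on a single diagonal plane curve $\Gamma$ of degree $k+1$ (smooth, or a union of $k+1$ concurrent lines) and $I(Z)_{k+1}=\langle F_\Gamma\rangle$; if it is $3$, the relations above force $h_Z(2k-1)\le(k+1)^2-2$. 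In the curve case one pins down $h_Z$ from $h_Z\le h_{\Gamma}$ together with $I(Z)\subseteq(X_0^{k+1},X_1^{k+1},X_2^{k+1})$, reads off the graded Betti numbers by Hilbert--Burch, selects a complete intersection through $\Gamma$ inside $I(Z)$ and links $Z$ to the residual scheme, whose length and structure turn out to be incompatible with $Z$ being reduced of length $(k+1)^2+1$ and with irredundancy; the sub-case where $\Gamma$ is an arrangement of $k+1$ lines (so that pigeonhole forces $\ge k+2$ points of $Z$ on one line) and the rank-$3$ case are settled by parallel, more elementary Hilbert-function arguments. I expect the linkage/residuation bookkeeping---tracking precisely which Hilbert functions and resolutions are admissible under $I(Z)\subseteq(X_0^{k+1},X_1^{k+1},X_2^{k+1})$---to be the principal difficulty.
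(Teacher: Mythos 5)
Your reduction of both bullet points to the single statement ``Lemma A'' --- that $x^ky^kz^k$ admits no irredundant decomposition of length $(k+1)^2+1$ --- is correct, and it is exactly the paper's strategy: Lemma A is \Cref{thm: main overcomplete}, and your grid bookkeeping reproduces \Cref{prop: any vars} together with the deduction of \Cref{thm: main} from it (including the correct observation that irredundancy forces the powers $\ell_p^{3k}$ to be linearly independent, and that a grid is cut out in degree $2k+2\le 3k$, so the extra point would have to lie on it). The difficulty is that Lemma A is the entire content of Section~4 of the paper, and you have only sketched a plan for it; the hardest steps are left unaddressed and the route you propose is not obviously completable.

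Concretely: (i) in your main case (rank $2$, so $Z$ lies on a curve $\Gamma=V(\alpha X_0^{k+1}+\beta X_1^{k+1}+\gamma X_2^{k+1})$), the constraints $h_Z\le h_\Gamma$, $I(Z)\subseteq\Ann(g)$ and $h_Z(3k)=\#Z$ do \emph{not} pin down $h_Z$ --- several first-difference functions remain admissible --- and you do not say which complete intersection you link through or what contradiction the residual scheme yields. The paper circumvents exactly this by adjoining a disjoint grid $\bbX$ and analyzing the union $\bbX\cup\bbY$ of $2(k+1)^2+1$ points: the plateau theorem (\Cref{thm: bgm}) applied to the union, where a plateau would contradict minimality of $\bbX$ via an intersection-of-spans argument, determines $Dh_{\bbX\cup\bbY}$ completely, and the Cayley--Bacharach property of a union of two irredundant decompositions (\Cref{lem: CB from AC} and \Cref{lem: CB implies CI}) is what produces the $(k+1,2k+3)$ complete intersection needed for the liaison. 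None of these ingredients appear in your plan. (ii) Even granting the liaison, the final contradiction in the paper is not a length or Hilbert-function count: it is the explicit Hilbert--Burch/apolarity computation of \Cref{lemma: apolar identities}, which shows that the putative $\bbY$ must be a grid plus one extra point and hence redundant. Your phrase ``whose length and structure turn out to be incompatible'' has no substitute for this step. (iii) The claim that the rank-$3$ case is settled by ``elementary Hilbert-function arguments'' is unproven and not obviously true: the bound $h_Z(2k-1)\le(k+1)^2-2$, combined with $h_Z(k+1)=\binom{k+3}{2}$ and strict growth of $h_Z$ up to degree $3k$, closes only for small $k$ (roughly $k\le 4$); for larger $k$ the inequalities are consistent and a further plateau-type argument is needed. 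In short, the proposal correctly isolates and reduces to the key lemma but does not prove it; the missing content is precisely the proof of \Cref{thm: main overcomplete}.
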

\Cref{thm: main} is proved in~\Cref{sec: main proof}. Analogous results for the cases $n=1$ and $(n,k) = (2,1)$, are given in~\Cref{sec: first cases}. In fact, a partial version of the first part of the statement holds in any number of variables, see~\Cref{prop: any vars}.

The polynomial $f$ of~\Cref{thm: main} can be considered as the very first generalization beyond the cases of monomials or sums of disjoint monomials, which were considered in \cite{CCG12}. The Waring rank for binomials in two variables was computed in \cite{BM:Binomials}. 

Polynomials of the form $\ell_1 \cdots \ell_d + \ell_0^d$ define a fundamental model of computation, the product-plus-power model, studied in \cite{Kum19,DGIJL25}. From the point of view of computational complexity, this model is essentially equivalent to border $\Sigma \Lambda \Sigma$ circuit complexity, that is the model of border Waring rank, see \cite[Theorem 1.1]{DGIJL25}. Several debordering results were provided in \cite{DGIJL25}, but a major barrier is that we lack the fundamental understanding of Waring decompositions of (even very simple) binomials.~\Cref{thm: main} is a first step in this direction.

From a different point of view, one is often interested in understanding the existence of possibly non-minimal, irredundant, decompositions of homogeneous polynomials. These are expressions $f = \ell_1^d + \cdots + \ell_r^d$ such that no proper subset of $\{ \ell_1 \vvirg \ell_r\}$ yields a decomposition of $f$, but $r$ is possibly not minimal, in the sense that the inequality $\rmR(f) \leq r$ can be strict. Very little is understood about this setting. The case of quadrics is already interesting: in that case the set of decompositions is parametrized by \emph{Parseval frames}, geometrically encoded by the Stiefel manifolds, see, e.g., \cite{BryGes}. The non-existence of non-minimal, irredundant decompositions for values of $r$ near $\rmR(f)$ was studied in \cite{AC20,AngCh22Minimality,ChiOtt} in special cases. 

Having a good understanding of non-minimal decompositions is useful in several contexts. For instance, for some numerical methods, it is desirable to have a positive-dimensional space of solutions to a given problem, and search in such a space for solutions with special properties; minimal decompositions often arise in a finite number, whereas the relaxation to non-minimal decompositions allows for infinite families of solutions more suitable for numerical search, see, e.g., \cite[Ch. 2]{Wolsey}. This is convenient in cases where one searches for decompositions with other kinds of minimality properties \cite{DerkNuclear} or special geometric features \cite{RHST24}. More generally, the Hilbert function methods used in the present work often exploit the existence of a non-minimal, irredundant decomposition, to construct a minimal one; this was applied effectively in \cite{AngCh22Minimality} using algebraic and geometric properties of the minimal free resolution of ideals of points and liaison in $\bbP^2$. Our proof is based on similar techniques, and provides the following result, of which~\Cref{thm: main} will be a consequence. 
\begin{theorem}\label{thm: main overcomplete}
    Let $g = x^k y^k z^k$. Then $g$ has no irredundant Waring decompositions of length $(k+1)^2 +1$.
\end{theorem}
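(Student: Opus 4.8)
\medskip
\noindent\emph{Proof strategy.}
The plan is to work entirely with apolarity. By the Apolarity Lemma, $g=x^ky^kz^k$ admits an irredundant decomposition of length $r$ precisely when there is a set $\Gamma$ of $r$ distinct points of $\bbP^2$ whose saturated ideal satisfies $I(\Gamma)\subseteq g^\perp$ and is \emph{minimal} with this property, meaning $I(\Gamma')\not\subseteq g^\perp$ for every proper $\Gamma'\subsetneq\Gamma$ (one checks that this forces the powers $\ell_p^{3k}$, $p\in\Gamma$, to be linearly independent --- equivalently, $\Gamma$ imposes independent conditions in degree $3k$ --- and forces all coefficients in the decomposition $g=\sum_{p\in\Gamma}\lambda_p\ell_p^{3k}$ to be nonzero). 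Since $g$ is a monomial, $g^\perp=(X^{k+1},Y^{k+1},Z^{k+1})\subseteq R:=\CC[X,Y,Z]$ is a monomial complete intersection; set $A=R/g^\perp$, a graded Artinian Gorenstein algebra of socle degree $3k$ whose Hilbert function $h_A$ --- that of a complete intersection of three forms of degree $k+1$ --- is symmetric about $3k/2$, equals $\binom{i+2}{2}$ for $i\le k$, and satisfies $h_A(i)<(k+1)^2$ for every $i$. It therefore suffices to show that no set $\Gamma$ of $(k+1)^2+1$ points is minimal apolar to $g$.

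Suppose, for contradiction, that such a $\Gamma$ exists, and let $h_\Gamma$ be its Hilbert function. Since $R/I(\Gamma)$ surjects onto $A$ we have $h_\Gamma\ge h_A$ pointwise; combined with $h_\Gamma(i)\le\binom{i+2}{2}$ this forces $h_\Gamma(i)=\binom{i+2}{2}$ for $i\le k$, so $I(\Gamma)$ contains no form of degree $\le k$; in particular $\Gamma$ lies on no line. Now $I(\Gamma)_{k+1}\subseteq(g^\perp)_{k+1}=\langle X^{k+1},Y^{k+1},Z^{k+1}\rangle$, which is $3$-dimensional. If $\dim_\CC I(\Gamma)_{k+1}=3$ then $I(\Gamma)=g^\perp$, impossible because $g^\perp$ is not the ideal of a reduced scheme; if $\dim_\CC I(\Gamma)_{k+1}=2$, then $\Gamma$ is contained in the common zero locus of two independent elements of $\langle X^{k+1},Y^{k+1},Z^{k+1}\rangle$, which --- depending on how many of the three coordinate coefficients vanish --- is a reduced complete intersection of exactly $(k+1)^2$ points, or is contained in a coordinate line, or is a single point, each contradicting either $|\Gamma|=(k+1)^2+1$ or the fact that $\Gamma$ lies on no line. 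Hence $\dim_\CC I(\Gamma)_{k+1}\in\{0,1\}$, and in both cases $h_\Gamma$ and the graded resolution of $I(\Gamma)$ (encoded by the Hilbert--Burch matrix) are tightly constrained by $h_A\le h_\Gamma$, by $h_\Gamma(3k)=(k+1)^2+1$, and by the requirement that every minimal generator of $I(\Gamma)$ lie in $(X^{k+1},Y^{k+1},Z^{k+1})$.

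The decisive step is to rule out these two cases by liaison in $\bbP^2$, in the spirit of \cite{AngCh22Minimality}. When $\dim_\CC I(\Gamma)_{k+1}=1$, after acting by the torus $(\CC^*)^3$ (which fixes $g$ and $g^\perp$ up to scalar) we may assume $\Gamma$ lies on the curve $C=V(q)$, with either $q=X^{k+1}+Y^{k+1}+Z^{k+1}$ (smooth and irreducible) or $q=X^{k+1}+Y^{k+1}$ (a union of $k+1$ concurrent lines). Fixing an apolar set $\Delta=V(q,q')$ of $(k+1)^2$ points with $q'\in\langle X^{k+1},Y^{k+1},Z^{k+1}\rangle$ generic --- a minimal decomposition of $g$ by \cite{CCG12,BBT13} (\Cref{prop: monomial results}) --- both $\Gamma$ and $\Delta$ lie on $C$; since $[g]$ lies in the secant spans of both, $\Gamma\cup\Delta$ fails to impose independent conditions in degree $3k$, and a Riemann--Roch computation on $C$ shows that $\Gamma$ is linearly equivalent on $C$ to $(k+1)H+R$ for a single point $R\in C$ ($H$ the hyperplane class). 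One then uses $I(\Gamma)\subseteq g^\perp$ in degree $k+2$, where $I(\Gamma)_{k+2}\subseteq\langle X^{k+1},Y^{k+1},Z^{k+1}\rangle\cdot R_1$, to link $\Gamma$ through a complete intersection $V(q,G)$ with $G\in I(\Gamma)_{k+2}$ not a multiple of $q$; the liaison duality of Hilbert functions and resolutions, together with the knowledge of the point $R$ and the classification of the minimal decompositions of $g$, then yields either a proper apolar subset of $\Gamma$ --- contradicting minimality --- or a configuration on $C$ that cannot occur. The case $\dim_\CC I(\Gamma)_{k+1}=0$ is handled in the same way, starting the linkage directly from $I(\Gamma)_{k+2}\subseteq\langle X^{k+1},Y^{k+1},Z^{k+1}\rangle\cdot R_1$.

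I expect this last step to be the main obstacle, and within it the case where $q$ is the irreducible Fermat-type curve: there the numerical data alone --- the Hilbert function, the degree of $\Gamma$ as a divisor on $C$, and independence in degree $3k$ --- are mutually consistent, so the contradiction must come from the finer interplay between the non-saturated module $g^\perp/(q)$ on $C$, the Hilbert--Burch matrix of $I(\Gamma)$, and the rigidity of the minimal decompositions of $g$, rather than from a dimension count.
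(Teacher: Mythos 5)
Your setup is sound and matches the paper's starting point: apolarity reduces the problem to ruling out a minimal apolar set $\Gamma$ of $(k+1)^2+1$ points, the cases $\dim I(\Gamma)_{k+1}\in\{2,3\}$ are correctly dispatched, and liaison in $\bbP^2$ is indeed the right tool. But the argument stops exactly where the proof has to begin: the cases $\dim I(\Gamma)_{k+1}\in\{0,1\}$ are only described as a plan, and your closing paragraph concedes that you do not know how to extract the contradiction in the main sub-case (the irreducible Fermat-type curve). That is not a removable technicality --- it is the entire content of the theorem. Moreover, the case $\dim I(\Gamma)_{k+1}=0$ is not ``handled in the same way'': a priori nothing in your argument produces a degree-$(k+1)$ curve through $\Gamma$ at all, so the proposed linkage has no starting point there.

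The paper closes precisely these gaps by two moves you do not make. First, instead of working with $\Gamma$ alone, it adjoins a \emph{disjoint} minimal decomposition $\bbX$ of length $(k+1)^2$ (\Cref{prop: monomial results}) and studies $\bbX\cup\bbY$: because both sets decompose $g$, the union fails to impose independent conditions in degree $3k$, and the Davis/Bigatti--Geramita--Migliore plateau theorem (\Cref{thm: bgm}) combined with irredundancy forces the first difference of the Hilbert function of the union into a single rigid shape \eqref{eqn: XuY final}; this in particular forces $I(\bbY)_{k+1}\neq 0$, eliminating your $\dim=0$ case, and via the Cayley--Bacharach property (\Cref{lem: CB from AC}, \Cref{lem: CB implies CI}) places $\bbX\cup\bbY$ in a complete intersection of type $(k+1,2k+3)$. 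A double liaison (\Cref{thm: mapping cone}) then pins down the minimal free resolution, hence the $2\times 3$ Hilbert--Burch matrix of $\bbY$, with prescribed entry degrees. Second --- and this is the ``finer interplay'' you correctly identify as the obstacle --- the contradiction comes from an explicit computation (\Cref{lemma: apolar identities}): after normalizing by the stabilizer $\bfT_{\SL}\rtimes\frakS_3$ of the monomial, one imposes that every maximal minor of the Hilbert--Burch matrix lies in $(X^{k+1},Y^{k+1},Z^{k+1})$ and solves for the entries degree by degree in $Z$; this forces the matrix to degenerate so that $I(\bbY)=(L_1,L_2)\cap(E,G)$, i.e.\ $\bbY$ splits off a point from a length-$(k+1)^2$ decomposition and is therefore redundant. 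No Riemann--Roch or divisor-class argument on the Fermat curve is needed. As it stands, your proposal identifies the correct framework but does not prove the statement.
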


\section{Preliminaries}
We work over the field of complex numbers. Let $V$ be a vector space of dimension $n+1$ and let $V^*$ be its dual space. The ring of homogeneous polynomials on $V^*$ is identified with the symmetric algebra $\Sym V$: in coordinates, if $x_0 \vvirg x_n$ is a basis of $V$, then $\Sym V = \bbC[x_0 \vvirg x_n]$. The space of homogeneous polynomials of degree $d$ is denoted $S^d V$.

Let $f \in S^d V$. The Waring rank of $f$ is 
\[
\rmR(f) = \min\{ r : f = \ell_1^d + \cdots + \ell_r^d \text{ for some } \ell_1 \vvirg \ell_r \in V\}.
\]
From a geometric point of view, one may consider the Veronese variety $v_d (\bbP V) = \{ [\ell^d] \in \bbP S^d V : \ell \in V\}$, and equivalently define the Waring rank as 
\[
\rmR(f) = \min \{ r : [f] \in \langle v_d(\bbX) \rangle \text{ for a set $\bbX \subseteq \bbP V$ of $r$ points} \}.
\]
A set $\bbX = \{ [\ell_1] \vvirg [\ell_r] \}$ such that $[f] \in \langle v_d(\bbX) \rangle$ is called a \emph{Waring decomposition} of $f$. We say that $\bbX$ is an irredundant decomposition if $\bbX$ is minimal with respect to inclusion, that is there is no $\bbX' \subsetneq \bbX$ such that $[f] \in \langle v_d(\bbX') \rangle$. We say that $\bbX$ is a minimal decomposition, or a rank decomposition, if the Waring rank equals the number of elements of $\bbX$, that is $\rmR(f) = \# \bbX $.

\subsection{Apolarity theory} A fundamental tool in the study of Waring rank and Waring decompositions is \emph{apolarity theory}, which relates the existence of a Waring decomposition of $f \in S^d V$ to the existence of an ideal of points contained in the annihilator of $f$ under an \emph{apolar action}. We provide here an outline of the theory and the main results and we refer to \cite{BCCGO} for more details.

The ring $\Sym V^*$ in the dual variables has two roles: it can be regarded as the homogeneous coordinate ring of the projective space $\bbP V$, and as the ring of differential operators (with constant coefficients) on $\Sym V$. In the first interpretation, one can consider ideals of subvarieties, and in particular finite sets of points, of $\bbP V$ inside $\Sym V^*$. In the second interpretation, every polynomial $f$ has an \emph{apolar ideal} 
\[
\Ann(f) = \{ D \in \Sym V^* : D \contract f  = 0\},
\]
where $D \contract f $ is the result of the differentiation of $f$ by the differential operator $D$. We will use capital letters $X_0 \vvirg X_n$ to denote the basis of $V^*$ dual to the basis $x_0 \vvirg x_n$ of $V$. In particular $X_j (x_i ) = \frac{\partial}{\partial x_j} x_i = \delta _{ij}$. If $F \in \Sym V^*$ then $F \contract f = F( \frac{\partial}{\partial x_0} \vvirg \frac{\partial}{\partial x_n}) f$. 

The homogeneous components of the apolar ideal are characterized as the kernel of a corresponding partial derivative map, classically called \emph{catalecticant} map:
\begin{align*}
\cat_p(f) : S^p V^* &\to S^{d-p} V \\ 
D & \mapsto D \contract f.
\end{align*}

The classical Apolarity Lemma relates the ``two natures'' of the ring $\Sym V^*$, see, e.g., \cite[Lemma 5]{BCCGO} for details.
\begin{lemma}[Apolarity Lemma]\label{lem: apolarity}
Let $f \in S^d V$ and let $\bbX = \{ [\ell_1] \vvirg [\ell_r] \} \subseteq \bbP V$ be a set of points. The following are equivalent:
\begin{itemize}
    \item there exist $\lambda _ 1 \vvirg \lambda_r$ such that $f = \lambda_1 \ell_1^d + \cdots + \lambda_r \ell_r^d$;
    \item the ideal $I(\bbX)$ of $\bbX$ is contained in the apolar ideal $\Ann(f)$.
\end{itemize}
\end{lemma}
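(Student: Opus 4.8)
The plan is to prove the two implications separately, the engine of both being the explicit description of the apolar action on a power of a linear form. I would begin by recording that formula. For $\ell = a_0 x_0 + \cdots + a_n x_n$ and a homogeneous $F \in S^p V^*$, iterated differentiation gives
\[
F \contract \ell^d = \tfrac{d!}{(d-p)!}\, F(a_0 \vvirg a_n)\, \ell^{d-p} \quad (p \le d), \qquad F \contract \ell^d = 0 \quad (p > d),
\]
where $F(a_0 \vvirg a_n)$ is the value of $F$ at the coordinate vector of $[\ell]$. The consequences I will use are that $F \contract \ell^d$ vanishes precisely when $F$ vanishes at $[\ell]$ (automatically if $p>d$), and that in top degree $p=d$ the result is the scalar $d!\, F(a_0 \vvirg a_n)$.

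The forward implication is then immediate. Assuming $f = \sum_i \lambda_i \ell_i^d$, and noting that $I(\bbX)$ and $\Ann(f)$ are both homogeneous ideals, it suffices to test an arbitrary homogeneous $D \in I(\bbX)$. Since $D$ vanishes at each $[\ell_i]$, the formula forces $D \contract \ell_i^d = 0$ for every $i$, whence $D \contract f = \sum_i \lambda_i (D \contract \ell_i^d) = 0$ and $D \in \Ann(f)$.

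For the reverse implication I would pass to the top-degree apolar pairing $S^d V^* \times S^d V \to \bbC$, $(D,g) \mapsto D \contract g$, which is perfect: in the monomial basis its Gram matrix is diagonal with nonzero entries $\alpha!$. Setting $W = \langle \ell_1^d \vvirg \ell_r^d \rangle \subseteq S^d V$ and using the $p=d$ case of the formula, its orthogonal is $W^\perp = \{ D \in S^d V^* : D \text{ vanishes at every } [\ell_i] \} = I(\bbX)_d$. The hypothesis $I(\bbX) \subseteq \Ann(f)$ restricts in degree $d$ to $I(\bbX)_d \subseteq \Ann(f)_d$, i.e. $D \contract f = 0$ for all $D \in W^\perp$; by non-degeneracy $(W^\perp)^\perp = W$, so $f \in W$ and $f$ is a linear combination of the $\ell_i^d$.

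The one place calling for care is this reverse direction: one must confirm the non-degeneracy of the top apolar pairing and recognize that only the degree-$d$ piece $I(\bbX)_d$ of the containment is actually used, the double-orthogonal step being exactly what converts ``annihilated by all of $I(\bbX)_d$'' into ``lies in the span of the $d$-th powers''. The forward direction, by contrast, needs the full ideal but is a one-line consequence of the differentiation formula.
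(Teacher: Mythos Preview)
Your proof is correct and is the standard argument for the Apolarity Lemma. The paper does not actually supply its own proof of this statement, deferring instead to the reference \cite[Lemma 5]{BCCGO}, so there is nothing to compare against; your argument is self-contained and would serve perfectly well in its place.
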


We record here a special case of the results of \cite{CCG12, BBT13}, based on apolarity theory, which will be useful in the following.
\begin{proposition} \label{prop: monomial results}
Let $k$ be a positive integer and let $g = (x_0 \cdots x_n)^k \in S^{k(n+1)} V$. Then: 
\begin{enumerate}[(i)]
\item the apolar ideal $\Ann(g)$ of $g$ is 
\[
  \Ann(g) = (X_0^{k+1} \vvirg X_n^{k+1});
\]
\item the Waring rank of $g$ is $\rmR(g) = (k+1)^n$;
\item for every $\alpha_1 \vvirg \alpha_n \neq 0$, the complete intersection ideal 
\[
I(\bbX) = ( X_0^{k+1} - \alpha_1 X_1^{k+1} \vvirg  X_0^{k+1} - \alpha_n X_n^{k+1});
\]
defines a minimal decomposition $\bbX$ of $g$ of length $(k+1)^n$. All minimal decompositions arise in this way.
\end{enumerate}
\end{proposition}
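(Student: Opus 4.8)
The plan is to handle the three parts in increasing order of difficulty: I would prove (i), the upper bound in (ii), and the existence half of (iii) directly, and then reduce the lower bound in (ii) together with the classification in (iii) to the apolarity-theoretic results of \cite{CCG12,BBT13}.

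For (i), since $g$ has degree $k$ in each variable $x_i$ one has $X_i^{k+1} \contract g = 0$, so $(X_0^{k+1} \vvirg X_n^{k+1}) \subseteq \Ann(g)$ and there is a surjection of graded algebras $B := \Sym V^*/(X_0^{k+1} \vvirg X_n^{k+1}) \twoheadrightarrow A := \Sym V^*/\Ann(g)$. The algebra $B$ has $\bbC$-basis the $(k+1)^{n+1}$ ``box'' monomials $X_0^{b_0}\cdots X_n^{b_n}$ with $0 \le b_i \le k$. Applying such a monomial to $g$ yields $X_0^{b_0}\cdots X_n^{b_n} \contract g = \big(\prod_i \tfrac{k!}{(k-b_i)!}\big)\, x_0^{k-b_0}\cdots x_n^{k-b_n}$, a nonzero multiple of a monomial; as $(b_0 \vvirg b_n)$ ranges over the box these are distinct, hence linearly independent in $\Sym V$. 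Since $D \mapsto D \contract g$ factors through $A$, the images of the box monomials in $A$ are linearly independent, so $\dim_\bbC A \ge (k+1)^{n+1} = \dim_\bbC B$. Combined with the surjection this forces $A \cong B$, that is $\Ann(g) = (X_0^{k+1} \vvirg X_n^{k+1})$.

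For the upper bound in (ii) and existence in (iii), fix $\alpha_1 \vvirg \alpha_n \neq 0$ and set $J = (X_0^{k+1} - \alpha_1 X_1^{k+1} \vvirg X_0^{k+1} - \alpha_n X_n^{k+1})$. Each generator lies in $\Ann(g)$ by part (i), so $J \subseteq \Ann(g)$. The common zero locus $\bbX = V(J)$ contains no point with $X_0 = 0$ (such a point would force all $X_i = 0$), so on the chart $X_0 = 1$ the equations read $X_i^{k+1} = \alpha_i^{-1}$, giving $k+1$ distinct values for each $X_i$ and hence exactly $(k+1)^n$ distinct reduced points; thus $J$ is the saturated ideal of a reduced scheme $\bbX$ of length $(k+1)^n$. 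By the Apolarity Lemma (\Cref{lem: apolarity}), $g$ admits a Waring decomposition supported on $\bbX$, whence $\rmR(g) \le (k+1)^n$ and $\bbX$ is a decomposition of the asserted form.

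It remains to prove the lower bound $\rmR(g) \ge (k+1)^n$ in (ii) and that every minimal decomposition arises as above in (iii); this is the heart of the matter, and the content I would draw from \cite{CCG12,BBT13}. By the Apolarity Lemma a minimal decomposition corresponds to a reduced $\bbX$ with $I(\bbX) \subseteq \Ann(g) = (X_0^{k+1} \vvirg X_n^{k+1})$ of minimal cardinality; two features are immediate, namely $I(\bbX)_p = 0$ for $p \le k$ and $I(\bbX)_{k+1} \subseteq \langle X_0^{k+1} \vvirg X_n^{k+1}\rangle$. The main obstacle is the lower bound itself: the estimate $\rmR(g) \ge \max_p \rank \cat_p(g)$, and more generally the comparison $\#\bbX \ge \max_p \dim_\bbC A_p$ coming from $I(\bbX) \subseteq \Ann(g)$, falls strictly short of $(k+1)^n$ — already for $n=2$, $k=2$ the maximal value of the Hilbert function of $A$ equals the coefficient of $t^3$ in $(1+t+t^2)^3$, namely $7<9$ — so one cannot conclude by Hilbert-function considerations alone and must invoke the finer apolarity argument of \cite{CCG12}. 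Granting the lower bound, minimality forces $\dim I(\bbX)_{k+1} = n$, these $n$ forms $\sum_j \beta_{ij} X_j^{k+1}$ being a regular sequence cutting out $\bbX$; since $I(\bbX)$ and this complete intersection both have degree $(k+1)^n$ they coincide, and a linear change of coordinates on the $X_j^{k+1}$ (available precisely because $\bbX$ avoids the coordinate hyperplanes) brings the generators to the form $X_0^{k+1} - \alpha_i X_i^{k+1}$ with $\alpha_i \neq 0$. Verifying that minimality forces exactly this complete-intersection structure — for $n=2$ through the Hilbert–Burch description of the resolution of $I(\bbX)$ — is the remaining technical core, carried out in \cite{BBT13}.
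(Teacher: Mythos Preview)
Your proposal is correct and takes essentially the same approach as the paper: the paper's own proof simply states that (i) is immediate and defers (ii) and (iii) entirely to \cite{CCG12} and \cite{BBT13} (with a pointer to \cite{CCO17}), while you supply full details for (i), the upper bound in (ii), and the existence direction of (iii), and then defer the lower bound and the classification to the same references. Your added commentary on why the catalecticant bound alone is insufficient is accurate and helpful, and your sketch of the classification argument correctly identifies the complete-intersection structure of $I(\bbX)_{k+1}$ as the technical core handled in \cite{BBT13}.
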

\begin{proof}
Part (i) is immediate. Part (ii) follows from \cite{CCG12}. Part (iii) follows from \cite{BBT13}; see also \cite{CCO17}.
\end{proof}

\subsection{Hilbert functions of sets of points}
The condition that $I(\bbX) \subseteq \Ann(f)$ in~\Cref{lem: apolarity} guarantees, in particular, that, for every integer $t$, one has $\codim \Ann(f)_t \leq \codim I(\bbX)_t$, where the homogeneous components are regarded as subspaces of $S^t V^*$. The Hilbert function of a homogeneous ideal is the object that records this codimension. More precisely, for a homogeneous ideal $I \subseteq \Sym V^*$, the \emph{Hilbert function} of (the algebra associated to) $I$ is 
\begin{align*}
\HF_{ \Sym V^* / I } : \bbZ &\to \bbZ \\ 
t &\mapsto  \codim I_t.
\end{align*}
If $I = I(\bbX)$, write $\HF_\bbX (-)$ for the Hilbert function of $\Sym V^* / I(\bbX)$. If $\bbX = \{ [v_1] \vvirg [v_r]\}$ then one may equivalently define 
\[
\HF_{ \bbX}(t) = \rank ( \ev_\bbX : S^t V^* \to \bbC^r ) 
\]
where $\ev_\bbX$ is the evaluation map at the vectors $v_1 \vvirg v_r$; this rank does not depend on the choice of representatives of the points $[v_1] \vvirg [v_r]$. In particular $\HF_\bbX$ records the number of linearly independent conditions imposed by $\bbX$ on the homogeneous polynomials of degree $d$. If $\HF_{ \bbX} (t) = \#\bbX$, we say that $\bbX$ imposes independent conditions in degree $t$.

The study of Hilbert functions of sets of points in projective spaces is a central theme in algebraic geometry and commutative algebra. We only require few results about this subject, for which we refer to \cite[Sec. 2]{ChiGes} and \cite{chiantini2019hilbert}.

If $\bbX$ is a finite set of points, then $\HF_\bbX$ is strictly increasing until it stabilizes to the value $\#\bbX$; the \emph{regularity} of $\bbX$ is the first degree after the one where $\HF_\bbX$ reaches its stable value, that is $\reg(\bbX) = \min \{ \tau > 0 : \HF_\bbX(\tau) = \HF_\bbX(\tau-1)\}$. In particular, if $\bbX$ is a set of $r$ points then $\reg(\bbX) \leq \#\bbX$.

The \emph{first difference of the Hilbert function} is
\[
D \HF_\bbX (t) = \HF_\bbX (t) - \HF_\bbX (t-1) .
\]
For every set of points $\bbX$, we have $D\HF_\bbX(t) = 0$ if $t \geq \reg(\bbX)$; therefore often $D \HF_\bbX$ is recorded simply as the sequence of its non-zero values and represented pictorially in a diagram of boxes. Since $\sum_{t} Dh_\bbX(t) = \#\bbX$, the total number of boxes in the diagram for $Dh_\bbX$ equals $\#\bbX$. For instance, the blue boxes in~\Cref{fig: example diagram} represent the first difference of the Hilbert function $D h_\bbX = (1,2,1,1)$ of the set of five points 
\[
\bbX = \{(1,0,0), (1,1,0) , (1,2,0), (0,1,0), (0,0,1) \} \subseteq \bbP^2.
\] 
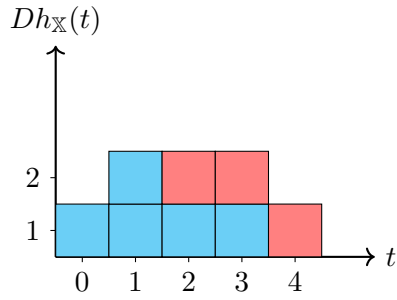
\begin{figure}[!htp]
    \centering
\begin{tikzpicture}[scale=0.7] 
    \draw[->, thick] (0,0) -- (6,0) node[right] {$t$};
    \draw[->, thick] (0,0) -- (0,4) node[above] {$Dh_{\mathbb X}(t)$};

    \foreach \x in {0,...,4} \draw (\x + 0.5,0) -- (\x + 0.5,-0.1) node[below] {$\x$};
    \foreach \y in {1,...,2} \draw (0,\y - 0.5) -- (-0.1,\y -0.5) node[left] {$\y$};
    
    \foreach \x/\y in {0/1, 1/1, 2/1, 3/1, 1/2} 
    {
        \fill[draw=black, fill=cyan!50] (\x, \y-1) rectangle (\x+1, \y);
    }

    \foreach \x/\y in {2/2, 3/2, 4/1}
    {
        \fill[draw=black, fill=red!50] (\x, \y-1) rectangle (\x+1, \y);
    }
\end{tikzpicture}
    \caption{In blue, the diagram of $D h_\bbX = (1,2,1,1)$ for the set $\bbX = \{(1,0,0), (1,1,0) , (1,2,0), (0,1,0), (0,0,1) \}$. In red, the diagram of a linked set $\bbY$ of three collinear points. The union of the two diagrams represents $D h_{\bbX \cup \bbY}$, where $\bbX \cup \bbY$ is a complete intersection $(2,4)$.}
    \label{fig: example diagram}
\end{figure} 

The following result is a special case of \cite[Thm. 3.6]{bigatti1994geometric}; we will use it only in the case $n = 2$, in which case the original result already appears in \cite{Davis}.
\begin{theorem}\label{thm: bgm}
    Let $\bbX \subseteq \bbP V$ be a finite set of points. Let $s \leq t_0$ and suppose $s = D \HF_\bbX (t_0) = D \HF_\bbX (t_0 + 1)$. Then there exists a reduced curve $C \subseteq \bbP V$ of degree $s$ such that
    \begin{itemize}
        \item $D \HF_{\bbX \cap C} (t) = D\HF_{C} ( t)$  for $t \leq t_0 + 1$;
        \item $D \HF_{\bbX \cap C} (t) =D \HF_{\bbX } (t)$  for $t \geq t_0$.
    \end{itemize}
\end{theorem}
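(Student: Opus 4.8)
The plan is to translate the statement through apolarity into a question about configurations of points in $\bbP^2$, and then to rule out the putative length-$((k+1)^2+1)$ irredundant decomposition by combining the Cayley--Bacharach-type identities coming from the coefficients of the decomposition with a regularity estimate and a liaison argument. By \Cref{lem: apolarity} and \Cref{prop: monomial results}(i), an irredundant Waring decomposition of $g$ of length $N+1$, where $N:=(k+1)^2$, is a reduced set $\bbX=\{[\ell_0]\vvirg[\ell_N]\}\subseteq\bbP^2$ of $N+1$ points with $I(\bbX)\subseteq A:=(X^{k+1},Y^{k+1},Z^{k+1})=\Ann(g)$ and with no proper subset $\bbX'\subsetneq\bbX$ satisfying $I(\bbX')\subseteq A$. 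Assuming such an $\bbX$ exists, I would first show that $v_{3k}(\bbX)$ is linearly independent: otherwise $\langle v_{3k}(\bbX)\rangle=\langle v_{3k}(\bbX')\rangle$ for some proper $\bbX'$, whence $[g]\in\langle v_{3k}(\bbX')\rangle$ and $\bbX'$ would decompose $g$, contradicting irredundancy. Thus $\HF_\bbX(3k)=N+1$ and, writing $g=\sum_i\lambda_i\ell_i^{3k}$, all $\lambda_i\neq 0$. Expanding $D\contract g$ for a homogeneous $D$ of degree $e$ then gives the apolarity identities that drive the argument: $D\in A_e$ if and only if $\sum_i\lambda_iD(P_i)\,\ell_i^{3k-e}=0$ in $S^{3k-e}V$, where $P_i$ represents $[\ell_i]$.

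Next I would analyse low degrees. Since $A_t=0$ for $t\le k$, also $I(\bbX)_t=0$ there, so $\bbX$ imposes independent conditions in degrees $\le k$. I then bound $\dim I(\bbX)_{k+1}\le\dim A_{k+1}=3$ and eliminate the values $3$ and $2$: if $\dim I(\bbX)_{k+1}=3$ then $A_{k+1}\subseteq I(\bbX)$, so $A\subseteq I(\bbX)\subseteq A$ and $I(\bbX)=A$, impossible since $A$ is $\frakm$-primary while $I(\bbX)$ is the saturated ideal of a nonempty point set; if $\dim I(\bbX)_{k+1}=2$, two independent forms of $A_{k+1}=\langle X^{k+1},Y^{k+1},Z^{k+1}\rangle$ form a regular sequence (a generic member of their pencil is a smooth, hence irreducible, Fermat-type curve, so they share no factor) and cut out a complete intersection of exactly $N$ points containing $\bbX$, contradicting $\#\bbX=N+1$. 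Hence $\dim I(\bbX)_{k+1}\in\{0,1\}$. I would then use the apolarity identities to force $\reg(\bbX)\ge 2k+1$: were $\reg(\bbX)\le 2k$, then $\HF_\bbX(2k-1)=N+1$, so the powers $\ell_i^{2k-1}$ are linearly independent; taking $e=k+1$, every $D\in A_{k+1}$ satisfies $\lambda_iD(P_i)=0$, whence $D(P_i)=0$ for all $i$ and $A_{k+1}=I(\bbX)_{k+1}$ would have dimension $3$ --- a contradiction.

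With these constraints in hand ($D\HF_\bbX(t)=t+1$ for $t\le k$, total mass $N+1$, $D\HF_\bbX(k+1)=k+2-\dim I(\bbX)_{k+1}$, Macaulay's bound forcing $D\HF_\bbX$ to be non-increasing once its value drops to at most its degree, and $\reg(\bbX)\ge 2k+1$), I would pin down the possible first-difference vectors and, in each, exhibit a curve: in the case $\dim I(\bbX)_{k+1}=1$, the form $G$ already places $\bbX$ on a curve $V(G)$ of degree $k+1$; in the case $\dim I(\bbX)_{k+1}=0$, a repeated value $s\le t_0$ in the non-increasing tail of $D\HF_\bbX$ triggers \Cref{thm: bgm}, producing a reduced curve through the tail of $\bbX$. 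Pairing the curve with an element of $A$ of complementary degree exhibits a complete intersection $\bbW\supseteq\bbX$ with defining forms in $A$; e.g. for $\dim I(\bbX)_{k+1}=1$ a form of $A_{k+2}$ not a multiple of $G$ gives a complete intersection of type $(k+1,k+2)$ whose residual $\bbY$ of $\bbX$ has only $k$ points. The objective of this step is to produce a form $G'\in A_{k+1}$ vanishing on all of $\bbX$ except exactly one point $P_j$: given such a $G'$, the pair $\{G,G'\}$ cuts out a complete intersection of type $(k+1,k+1)$ equal to $\bbX\setminus\{P_j\}$, i.e. a minimal decomposition of $g$ by \Cref{prop: monomial results}(iii); then $\bbX\setminus\{P_j\}$ already decomposes $g$, contradicting irredundancy.

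The hard part will be precisely this last step: producing the form $G'\in A_{k+1}$ vanishing on exactly all-but-one point of $\bbX$, equivalently showing that the image $W$ of the evaluation $A_{k+1}\to\bbC^{N+1}$ contains a coordinate vector $e_j$. The difficulty is structural, since a complete-intersection subconfiguration of size $N$ inside $\bbX$ is invisible to $I(\bbX)$ itself --- its defining forms vanish on $N$ points but not on the last one --- so its existence must be deduced indirectly. This is exactly where the apolarity identities re-enter, constraining the weighted vectors $(\lambda_iG'(P_i))_i$ to be linear dependencies among the powers $\ell_i^{2k-1}$, and where the liaison analysis of the small residual $\bbY$ must be matched against the rigid classification of minimal decompositions in \Cref{prop: monomial results}(iii). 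Controlling this interaction, and handling uniformly the two cases $\dim I(\bbX)_{k+1}=0$ and $1$, is the main technical obstacle.
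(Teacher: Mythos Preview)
Your proposal is addressing the wrong statement. The theorem in question, \Cref{thm: bgm}, is the Bigatti--Geramita--Migliore/Davis result asserting that a plateau $s=D\HF_\bbX(t_0)=D\HF_\bbX(t_0+1)$ with $s\le t_0$ forces many points of $\bbX$ onto a reduced curve of degree $s$. The paper does not prove this; it is quoted from the literature and used as a tool. Your write-up, by contrast, is an outline of an attack on \Cref{thm: main overcomplete} (the non-existence of irredundant decompositions of $g=x^ky^kz^k$ of length $(k+1)^2+1$); you even \emph{invoke} \Cref{thm: bgm} midway through (``a repeated value $s\le t_0$ in the non-increasing tail of $D\HF_\bbX$ triggers \Cref{thm: bgm}''), so your argument is circular with respect to the stated target.

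If your intention was really to prove \Cref{thm: main overcomplete}, then your outline is in the same spirit as the paper's argument but stops short of the decisive step. The paper also begins by pinning down the first-difference profile and producing a complete intersection via liaison, but it then goes further: it determines the full Hilbert--Burch matrix of the putative decomposition $\bbY$ (\Cref{prop: HB liaison}), and finishes with an explicit algebraic analysis of that matrix (\Cref{lemma: apolar identities}), normalizing via the stabilizer of $g$ and reading off from the apolarity constraints that the bottom row must degenerate, forcing $\bbY=\bbX\cup\{p_0\}$ with $\bbX$ a minimal decomposition. Your ``hard part'' --- producing a $G'\in A_{k+1}$ vanishing on all but one point --- is exactly what this Hilbert--Burch/normalization computation accomplishes, and your outline does not indicate how you would carry it out; the apolarity identities on the evaluation image $W\subseteq\bbC^{N+1}$ that you mention are not by themselves enough to isolate a coordinate vector $e_j$ in $W$.
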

If $\bbX$ satisfies $D \HF_\bbX (t_0) =  D \HF_\bbX (t_0+1)$, we say that $D \HF_\bbX$ has a plateau at height $s = D \HF_\bbX (t_0)$. For instance, the set of points $\bbX$ corresponding to the blue boxes of~\Cref{fig: example diagram} has a plateau at height $1$ starting at degree $2$.~\Cref{thm: bgm} guarantees that if $\bbX$ has a plateau, then a subset of the points of $\bbX$ satisfy the strong condition of lying on a curve of small degree; observe that indeed the set of points of~\Cref{fig: example diagram} has four points lying on a line. This fact is often used in the contrapositive: if many points lie on a curve of small degree, one derives a contradiction, and this allows one to conclude that the Hilbert function does not have ``unexpected'' plateaux.

We also record some results regarding points in $\bbP^2$. A fundamental result is the Hilbert-Burch Theorem: if $\bbX \subseteq \bbP^2$ is a set of points, then, setting $R = \Sym V^*$, the minimal free resolution of $I(\bbX)$ has the form 
\[
\textstyle 0 \to  \bigoplus_{j=1}^{\beta-1} R(-c_i) \xto{\mathrm{HB}_\bbX}  \bigoplus_{i=1}^\beta R(-b_i) \to I(\bbX) \to 0
\]
for some $\beta \geq 2$; moreover, $I(\bbX)$ is generated by the minors of size $\beta-1$ of the map $\mathrm{HB}_\bbX$, which is represented by a $\beta \times (\beta - 1)$ matrix called the \emph{Hilbert-Burch matrix} of $\bbX$.

We say that two disjoint sets of points $\bbX , \bbY$ are linked, or form a \emph{liaison}, if $\bbX \cup \bbY$ is a complete intersection. The following result is a version of \cite[Prop. 5.2.10]{MigLiaison} which relates the resolutions of $\bbX$ and the one of $\bbY$ when they form a liaison.
\begin{theorem}\label{thm: mapping cone}
Let $\bbX,\bbY \subseteq \bbP^2$ be disjoint $0$-dimensional schemes such that $\bbX \cup \bbY$ is a complete intersection of type $(d_1,d_2)$. Let 
\[
\textstyle 0 \to  \bigoplus_{j=1}^{\beta-1} R(-c_i) \to  \bigoplus_{i=1}^\beta R(-b_i) \to I(\bbX) \to 0
\]
be the minimal free resolution of $\bbX$. Then a (possibly not minimal) free resolution of $\bbY$ is given by 
\[
\textstyle 0 \to \bigoplus_{i=1}^\beta R(b_i - d_1 - d_2)  \to  R(-d_2) \oplus R(-d_1) \oplus \bigoplus_{j=1}^{\beta-1} R(c_i - d_1 - d_2) \to  I(\bbY) \to 0 .
\]
Moreover
\[
D \HF_\bbX (t) + D \HF_\bbY (d_1 + d_2 -2 - t) = D h_{\bbX \cup \bbY}(t).
\]
\end{theorem}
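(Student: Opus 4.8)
The statement is the classical codimension-two linkage theorem (Peskine--Szpiro) specialized to points in $\bbP^2$, and the natural route is the mapping cone construction, as the name suggests. Write $c = I(\bbX \cup \bbY)$; by hypothesis this is a complete intersection $(F_1,F_2)$ with $\deg F_i = d_i$, so $R/c$ is resolved by the Koszul complex
\[
0 \to R(-d_1-d_2) \to R(-d_1)\oplus R(-d_2) \to R \to R/c \to 0 .
\]
Since $c \subseteq I(\bbX)$, the natural surjection $R/c \to R/I(\bbX)$ lifts to a degree-zero comparison map from this Koszul complex to the minimal free resolution $0 \to \bigoplus_j R(-c_j) \to \bigoplus_i R(-b_i) \to R \to R/I(\bbX) \to 0$, the lift being the identity in homological degree $0$. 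The plan is to dualize this comparison map and take its mapping cone.

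Applying $\Hom_R(-,R)$ to the resolution of $R/I(\bbX)$ and twisting by $-d_1-d_2$ produces
\[
0 \to R(-d_1-d_2) \to \bigoplus_i R(b_i - d_1 - d_2) \to \bigoplus_j R(c_j - d_1 - d_2) \to 0 ,
\]
which, because $R/I(\bbX)$ is Cohen--Macaulay of codimension $2$, is exact except at the right-hand end, where its cohomology is $\operatorname{Ext}^2_R(R/I(\bbX),R)(-d_1-d_2)$. The crux of the argument is the liaison duality isomorphism: since $\bbX$ and $\bbY$ are linked by $c$, one has $I(\bbY) = \bigl(c : I(\bbX)\bigr)$ together with the canonical identification $I(\bbY)/c \cong \operatorname{Ext}^2_R(R/I(\bbX),R)(-d_1-d_2)$ (the correct twist can be checked on Hilbert series). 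Forming the mapping cone of the dualized comparison map, whose target is the dual Koszul complex (again a resolution of $R/c$, by Koszul self-duality), then yields
\[
0 \to \bigoplus_i R(b_i - d_1 - d_2) \to R(-d_1)\oplus R(-d_2) \oplus \bigoplus_j R(c_j - d_1 - d_2) \to I(\bbY) \to 0
\]
as a free resolution of $I(\bbY)$; here the $R(-d_1)\oplus R(-d_2)$ summand is precisely the contribution of the bottom of the dual Koszul complex. The main obstacle is to verify that the connecting maps of the cone assemble into this complex with exactly the stated twists and to confirm the liaison isomorphism; this is where Cohen--Macaulayness (concentrating the cohomology of the dual complex in a single degree) and Koszul self-duality do the work. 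The resolution is in general non-minimal, because a $R(-d_i)$ coming from the Koszul part may cancel against a summand of $\bigoplus_j R(c_j-d_1-d_2)$; I would leave such cancellations untouched, exactly as in the statement.

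For the numerical identity I would pass to generating functions. For a finite set $\bbZ \subseteq \bbP^2$ set $Q_\bbZ(s) = \sum_t D\HF_\bbZ(t)\, s^t$, a polynomial since $D\HF_\bbZ$ has finite support; it equals $(1-s)\,\mathrm{HS}_{R/I(\bbZ)}(s)$, so from any free resolution it is the numerator of the Hilbert series divided by $(1-s)^2$. The two resolutions above give
\[
Q_\bbX(s) = \frac{1 - \sum_i s^{b_i} + \sum_j s^{c_j}}{(1-s)^2}, \qquad Q_\bbY(s) = \frac{1 - s^{d_1} - s^{d_2} - \sum_j s^{\,d_1+d_2-c_j} + \sum_i s^{\,d_1+d_2-b_i}}{(1-s)^2},
\]
while the complete intersection gives $Q_{\bbX\cup\bbY}(s) = (1-s^{d_1})(1-s^{d_2})/(1-s)^2$. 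In generating-function form, the claimed identity $D\HF_\bbX(t) + D\HF_\bbY(d_1+d_2-2-t) = D\HF_{\bbX\cup\bbY}(t)$ reads $Q_\bbX(s) + s^{\,d_1+d_2-2}\,Q_\bbY(1/s) = Q_{\bbX\cup\bbY}(s)$. Substituting $(1-1/s)^2 = (1-s)^2/s^2$ into $Q_\bbY(1/s)$ and simplifying, the terms $\pm\sum_i s^{b_i}$ and $\pm\sum_j s^{c_j}$ cancel in pairs, leaving $\bigl(1 - s^{d_1} - s^{d_2} + s^{d_1+d_2}\bigr)/(1-s)^2 = Q_{\bbX\cup\bbY}(s)$. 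This final step is a routine formal verification and completes the proof.
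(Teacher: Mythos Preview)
Your argument is correct and is precisely the standard mapping cone proof of linkage in codimension two. The paper does not actually prove this theorem: it is quoted as a version of \cite[Prop.~5.2.10]{MigLiaison}, and the only comment made is that ``the maps of the resolution of $\bbY$ \ldots\ are defined via a snake lemma construction known as mapping cone which is not relevant for our applications.'' So there is no in-paper argument to compare against; your write-up supplies exactly the proof the paper gestures at.

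Two minor remarks on exposition. First, the raw mapping cone of the dualized comparison map is a length-three complex resolving $R/I(\bbY)$, with an extra $R(-d_1-d_2)$ in homological degrees $2$ and $3$; the reason the stated length-two resolution of $I(\bbY)$ comes out is that the comparison map is the identity in homological degree $0$, so after dualizing and twisting the top map $R(-d_1-d_2)\to R(-d_1-d_2)$ is still the identity and those terms split off. You use this implicitly when you land directly on the two-term complex, but it would be worth saying explicitly. Second, your generating-function verification of the $D\HF$ identity is clean and correct; the cancellation $\pm\sum_i s^{b_i}$, $\pm\sum_j s^{c_j}$ goes through exactly as you describe once one uses $(1-1/s)^2 = (1-s)^2/s^2$.
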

The maps of the resolution of $\bbY$ in~\Cref{thm: mapping cone} are defined via a snake lemma construction known as mapping cone which is not relevant for our applications. Informally, the last condition states that the sequence $D \HF_\bbY$ equals $D h_{\bbX \cup \bbY} - D \HF_\bbX$, read in reversed order.

For instance, let $\bbX$ be the set of points corresponding to the blue boxes of~\Cref{fig: example diagram}. Its minimal free resolution is 
\[
\textstyle 0 \to  R(-3) \oplus R(-5)  \to  R(-2)^{\oplus 2} \oplus R(-4) \to I(\bbX) \to 0,
\]
Let $F_2,F_4$ be generic elements of $I(\bbX)$ of degree $2$ and $4$ and let $\bbX \cup \bbY$ be the set of points defined by the complete intersection ideal $(F_2,F_4)$. In this case $Dh_{\bbX \cup \bbY} = (1,2,2,2,1)$ is represented by all boxes of~\Cref{fig: example diagram}. Applying~\Cref{thm: mapping cone}, we obtain that a resolution of $\bbY$ is given by 
\[
\textstyle 0 \to  R(-2) \oplus R(-4)^{\oplus 2}  \to  R(-1) \oplus R(-2) \oplus R(-3) \oplus R(-4) \to I(\bbY) \to 0,
\]
which is not minimal; the minimal resolution is the subcomplex $0 \to R(-4) \to R(-3) \oplus R(-1) \to I(\bbY) \to 0$. Indeed, the last statement of~\Cref{thm: mapping cone} guarantees that $D h_\bbY = (1,1,1)$, corresponding to the red boxes of~\Cref{fig: example diagram}, therefore $I(\bbY) = (G_1,G_3)$ for some polynomials of degree $1$ and $3$, respectively.

We conclude this section recalling some results on the Cayley-Bacharach property and prove a related result which will be useful in the next section. We say that a finite set of points $\bbZ \subseteq \bbP^n$ satisfies the \emph{Cayley-Bacharach property} in degree $d$ if, for every $p \in \bbZ$, we have $I(\bbZ)_d = I(\bbZ \setminus \{ p \})_d$; in other words this means that any form of degree $d$ vanishing on any $\#\bbZ-1$ points of $\bbZ$ vanishes at the last one too. It is easy to see that if $\bbZ$ satisfies the Cayley-Bacharach property in degree $d$ then it does in degree $d' \leq d$ as well, see \cite[Rmk. 4.2]{AngChiVan}.

This property is relevant in our setting because of the following result.
\begin{lemma}[{\cite[Lemma 2.25]{AC20}}] \label{lem: CB from AC}
Let $f \in S^d V$ and let $\bbX,\bbY \subseteq \bbP V$ be two disjoint irredundant decompositions of $f$. Then $\bbX \cup \bbY$ satisfies the Cayley-Bacharach property in degree $d$.
\end{lemma}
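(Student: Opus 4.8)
The plan is to argue by contradiction: suppose $\bbX\subseteq\bbP^2$ is an irredundant decomposition of $g=x^ky^kz^k$ with $\#\bbX=(k+1)^2+1$. By the Apolarity Lemma (\Cref{lem: apolarity}) we have $I(\bbX)\subseteq\Ann(g)=(X_0^{k+1},X_1^{k+1},X_2^{k+1})$, and two elementary facts will drive the whole argument. First, since $\Ann(g)$ is generated in degree $k+1$, we get $I(\bbX)_t=0$ for $t\le k$, so $\HF_\bbX(t)=\binom{t+2}{2}$ and $D\HF_\bbX(t)=t+1$ for $t\le k$. Second, irredundancy forces the Veronese images $v_{3k}(\bbX)$ to be linearly independent in $S^{3k}V$ — a linear relation among them would let us delete a point — which is exactly the statement $\HF_\bbX(3k)=\#\bbX$; thus $\bbX$ imposes independent conditions in degree $3k=\deg g$ and $\reg(\bbX)\le 3k+1$.

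The first genuine step is a \emph{coprimality lemma}: any two linearly independent forms in $\Ann(g)_{k+1}=\langle X_0^{k+1},X_1^{k+1},X_2^{k+1}\rangle$ are coprime. A pencil inside $\Ann(g)_{k+1}$ corresponds to a line in the $\bbP^2$ of coefficients $[a:b:c]$; unless that line is a coordinate line, a general member $aX_0^{k+1}+bX_1^{k+1}+cX_2^{k+1}$ with $abc\neq 0$ is projectively a smooth Fermat-type curve, hence irreducible, and a common factor would force the entire pencil to be a single fixed curve, which is impossible for a genuine pencil; the coordinate-line case reduces to binary forms $\alpha X_i^{k+1}+\beta X_j^{k+1}$, whose root sets are disjoint for distinct ratios. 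Together with Bézout this gives $\dim I(\bbX)_{k+1}\le 1$: two independent forms $P,Q\in I(\bbX)_{k+1}$ would be coprime, so $\bbX\subseteq V(P,Q)$, a complete intersection of only $(k+1)^2<\#\bbX$ points. Hence I split into the case $\dim I(\bbX)_{k+1}=0$ ($\bbX$ lies on no curve of degree $\le k+1$) and the case $\dim I(\bbX)_{k+1}=1$ ($\bbX$ lies on a unique curve $C=V(P)$ of degree $k+1$ with $P\in\Ann(g)_{k+1}$).

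Next I introduce a minimal decomposition. By \Cref{prop: monomial results} the rank decompositions of $g$ form a two-parameter family of complete intersections $\bbY$ of type $(k+1,k+1)$, with $\#\bbY=(k+1)^2$ and $\reg(\bbY)=2k+1$. Since $\bbX$ is finite I may choose such a $\bbY$ disjoint from $\bbX$; in the curve case the subfamily of those $\bbY$ whose ideal contains the fixed $P$ is cut out by a single linear condition on the two parameters, so I can also arrange $\bbY\subseteq C$. Both $\bbX$ and $\bbY$ are irredundant, so \Cref{lem: CB from AC} applies to the disjoint union $\bbZ=\bbX\cup\bbY$ (with $\#\bbZ=2(k+1)^2+1$): it satisfies the Cayley–Bacharach property in degree $3k$. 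The key numerical consequence is that a scheme that is Cayley–Bacharach in degree $d$ cannot impose independent conditions in degree $d$ (otherwise evaluation $S^d V^*\to\bbC^{\bbZ}$ would be surjective, producing a form through all but one point), so $\HF_\bbZ(3k)<\#\bbZ$ and therefore $\reg(\bbZ)\ge 3k+2$. On the other side, $I(\bbZ)\subseteq\Ann(g)$ gives $D\HF_\bbZ(t)=t+1$ for $t\le k$ and $\HF_\bbZ\ge\HF_{\Sym V^*/\Ann(g)}$, while in the curve case $D\HF_\bbZ(t)\le\deg C=k+1$ for all $t$.

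The heart of the proof — and the step I expect to be the main obstacle — is to turn $\reg(\bbZ)\ge 3k+2$ into a contradiction. The mechanism is that $\bbZ$ carries only about $2k^2$ points yet $D\HF_\bbZ$ must remain positive through degree $3k+1$; being diagonal up to degree $k$ and non-increasing afterwards, it must spread the remaining $\tfrac{(3k+2)(k+1)}{2}+1$ boxes over the at least $2k+1$ degrees from $k+1$ to $3k+1$. In the curve case the values are bounded by $k+1$, so by pigeonhole there is a plateau $D\HF_\bbZ(t_0)=D\HF_\bbZ(t_0+1)=s$ with $s\le k+1\le t_0$, and \Cref{thm: bgm} places a large subscheme $\bbZ\cap D$ on a reduced curve $D$ of degree $s$. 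The contradiction should come from the rigidity of $\Ann(g)$: the points of $\bbZ$ lie on the integral curve $C$ of degree $k+1$, so Bézout allows at most $s(k+1)$ of them on $D$, clashing with the size of $\bbZ\cap D$ forced by the plateau; here the liaison relation \Cref{thm: mapping cone}, applied to the complete intersection cut on $C$ by a minimal-degree form of $I(\bbZ)$ coprime to $P$, is the tool that quantifies $\#(\bbZ\cap D)$ and the residual set and makes the numerical clash precise. The parallel analysis in the non-curve case $\dim I(\bbX)_{k+1}=0$ is more delicate, since there the peak of $D\HF_\bbZ$ may exceed $k+1$ and the plateau height is harder to control; there one must instead play the forced plateau against the three distinguished curves spanning $\Ann(g)_{k+1}$, and completing this case cleanly is the remaining point I expect to require the most care.
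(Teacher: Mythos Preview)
Your proposal does not address the stated lemma at all. The claim to be proved is \Cref{lem: CB from AC}: if $\bbX,\bbY$ are two disjoint irredundant decompositions of some $f\in S^dV$, then $\bbX\cup\bbY$ has the Cayley--Bacharach property in degree $d$. What you have written is instead an outline of \Cref{thm: main overcomplete}, and you even \emph{invoke} \Cref{lem: CB from AC} as a black box midway through. The paper does not prove \Cref{lem: CB from AC} either --- it is quoted from \cite{AC20} --- but the argument is short and has nothing to do with monomials, plateaux, or liaison: write $f=\sum_i \lambda_i \ell_i^{\,d}=\sum_j \mu_j m_j^{\,d}$ with $\bbX=\{[\ell_i]\}$, $\bbY=\{[m_j]\}$; irredundancy forces every $\lambda_i$ and $\mu_j$ to be nonzero. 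If $H\in S^dV^*$ vanishes on $(\bbX\cup\bbY)\setminus\{[\ell_1]\}$, then applying $H$ to the second expression gives $H\contract f=0$, and applying it to the first gives $H\contract f=\lambda_1\,(H\contract \ell_1^{\,d})$; hence $H$ vanishes at $[\ell_1]$ as well.

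Even read as an attempt at \Cref{thm: main overcomplete}, your sketch is incomplete in ways you yourself flag: the case $\dim I(\bbX)_{k+1}=0$ is left open, and in the curve case the promised plateau/B\'ezout clash is only asserted, not carried out (and it is not clear it can be, since the Hilbert function of $\bbZ$ really does have the plateau shape of \eqref{eqn: XuY final} without contradiction). The paper's route is different and does not try to squeeze a contradiction out of $Dh_\bbZ$ alone: it determines $Dh_{\bbX\cup\bbY}$ exactly (\Cref{prop: HB liaison}), performs two liaison steps to pin down the Hilbert--Burch matrix of the putative length-$(k+1)^2+1$ decomposition, and then shows by an explicit coordinate computation (\Cref{lemma: apolar identities}) that any such matrix forces the decomposition to split off a redundant point.
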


We provide the following result, which will be useful in the next section.
\begin{lemma}\label{lem: CB implies CI}
Let $a,b$ be positive integers with $a +1 < b$. Let $\bbZ \subseteq \bbP^2$ be a finite set of points such that 
    \[
    Dh_{\bbZ} = (1, 2, \ldots, a , \ldots, a, \underset{\substack{\uparrow \\ \text{deg } b}}{a-2}, a-3, \ldots, 1)
    \]
    and $\bbZ$ satisfies the Cayley-Bacharach property in degree $a+b-4$. Let $E_1$ be the unique, up to scaling, nonzero element in $I(\bbZ)_{a}$ and let $E_2$ be a generic element of $I(\bbZ)_{b}$. Then $(E_1,E_2)$ is a complete intersection. 
\end{lemma}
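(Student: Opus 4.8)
The plan is to prove the equivalent statement that $E_1$ and a generic $E_2\in I(\bbZ)_b$ have no common factor.

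\emph{Step 1 (reducedness of $E_1$).} The diagram of $Dh_\bbZ$ has a plateau at height $a$, since $Dh_\bbZ(a)=a=Dh_\bbZ(a+1)$ (here $a+1\le b-1$). Applying \Cref{thm: bgm} with $t_0=a$ produces a reduced curve $C$ of degree $a$ with $Dh_{\bbZ\cap C}=Dh_C$ for $t\le a+1$ and $Dh_{\bbZ\cap C}=Dh_\bbZ$ for $t\ge a$. As $Dh_C=(1,2,\dots,a,a,\dots)$ agrees with $Dh_\bbZ$ in the overlap, $Dh_{\bbZ\cap C}=Dh_\bbZ$, so $\#(\bbZ\cap C)=\#\bbZ$ and $\bbZ\subseteq C$; by uniqueness of $E_1$ up to scalar, $C=V(E_1)$, hence $E_1$ is squarefree.

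\emph{Step 2 (the ``overflow'' and a reduction).} From the shape of $Dh_\bbZ$ one computes $\dim I(\bbZ)_t=\binom{t-a+2}{2}=\dim(E_1R_{t-a})$ for $a\le t\le b-1$, while $\dim I(\bbZ)_b=\binom{b-a+2}{2}+2$. Thus $|I(\bbZ)_b|$ exceeds the subsystem $E_1\cdot|\mathcal O_{\bbP^2}(b-a)|$ by exactly two dimensions, so a generic member is $E_1H+\mu F+\mu'F'$ with $H,\mu,\mu'$ generic. For each divisor $D$ of $E_1$ with $\deg D\ge1$ the set $\{G\in I(\bbZ)_b:D\mid G\}$ is a linear subspace, so a generic $E_2$ is coprime to $E_1$ unless some such $D$ divides \emph{all} of $I(\bbZ)_b$. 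Such a $D$ necessarily has $\deg D<a$, since $I(\bbZ)_b\subseteq(D)$ would force $\dim I(\bbZ)_b\le\binom{b-\deg D+2}{2}$, impossible when $\deg D=a$. So it suffices to rule out $I(\bbZ)_b\subseteq(D)$ with $1\le\delta:=\deg D\le a-1$.

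\emph{Step 3 (passing to the residual).} Assume $I(\bbZ)_b\subseteq(D)$. Write $E_1=DD'$ with $D,D'$ coprime (Step 1), $\deg D'=a-\delta$, and put $\bbZ''=\bbZ\setminus V(D)$. Since $\bbZ\subseteq V(D)\cup V(D')$ we have $\bbZ''\subseteq V(D')$, and, $\bbZ$ being reduced, $(I(\bbZ):D)=I(\bbZ'')$; hence $I(\bbZ)_b=D\cdot I(\bbZ'')_{b-\delta}$ and $\dim I(\bbZ'')_{b-\delta}=\binom{(b-\delta)-(a-\delta)+2}{2}+2$, the same ``$+2$ overflow'' with parameters $(a-\delta,b-\delta)$, which still satisfy $(b-\delta)-(a-\delta)=b-a>1$. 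Moreover the Cayley--Bacharach hypothesis descends: a form of degree $(a+b-4)-\delta$ vanishing on $\bbZ''\setminus\{p\}$ but not at $p\in\bbZ''$ would, after multiplication by $D$ (which vanishes on $\bbZ\setminus\bbZ''\subseteq V(D)$), violate the Cayley--Bacharach property of $\bbZ$ in degree $a+b-4$. Thus $\bbZ''$ satisfies Cayley--Bacharach in degree $(a-\delta)+(b-\delta)-4+\delta$, which is \emph{strictly larger} than $(a-\delta)+(b-\delta)-4$.

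\emph{Step 4 (closing the loop).} It remains to show that a set of points $\bbZ'$ lying on a reduced curve $V(F)$ of degree $a'$, with $\dim I(\bbZ')_{b'}=\binom{b'-a'+2}{2}+2$ for some $b'>a'+1$, cannot satisfy Cayley--Bacharach in a degree $>a'+b'-4$; applied to $\bbZ''$ this finishes the proof. Since removing a point lowers $h_{\bbZ'}$ in degree $\reg(\bbZ')-1$, the set $\bbZ'$ fails Cayley--Bacharach in every degree $\ge\reg(\bbZ')-1$, so it suffices to prove $\reg(\bbZ')\le a'+b'-2$. If a generic $G\in I(\bbZ')_{b'}$ is coprime to $F$, then $(F,G)$ is a complete intersection of type $(a',b')$; since $\dim I(\bbZ')_{b'}>\dim(F,G)_{b'}=\binom{b'-a'+2}{2}+1$, the scheme $\bbZ'$ is a proper subscheme of it, so the residual $\bbW'$ is nonempty, and \Cref{thm: mapping cone} gives $Dh_{\bbZ'}(t)=Dh_{(F,G)}(t)-Dh_{\bbW'}(a'+b'-2-t)$, which vanishes for $t\ge a'+b'-2$ (at $t=a'+b'-2$ both terms on the right equal $1$); hence $\reg(\bbZ')\le a'+b'-2$, a contradiction. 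If instead generic $G$ is not coprime to $F$, repeat Steps~2--3: excise a factor and pass to a set on a reduced curve of strictly smaller degree, with the analogous ``$+2$'' overflow and an even larger Cayley--Bacharach degree. The curve degree strictly decreases, and for a line the coprime case always holds (a linear divisor of all of $I(\bbZ')_{b'}$ again contradicts the dimension count), so the recursion terminates in the coprime case.

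The delicate point is Step~4: organizing the descent bookkeeping consistently and extracting from \Cref{thm: mapping cone} the sharp bound $\reg(\bbZ')\le a'+b'-2$ out of the single Hilbert-function value $\dim I(\bbZ')_{b'}=\binom{b'-a'+2}{2}+2$.
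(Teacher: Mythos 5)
Your argument is correct and follows essentially the same route as the paper's proof: identify the common factor of $E_1$ with the degree-$b$ part of $I(\bbZ)$, pass to the residual set, bound its regularity by the socle degree of the smaller complete intersection it properly sits inside, and multiply back by the excised factor to contradict the Cayley--Bacharach hypothesis. Your write-up makes explicit some points the paper leaves implicit (squarefreeness of $E_1$ via \Cref{thm: bgm}, the regularity bound via \Cref{thm: mapping cone}), and the recursion in Step~4 in fact closes after one round since the excised divisor $D$ was already maximal; the only caveat is that \Cref{thm: mapping cone} is stated for disjoint linked schemes, while $\bbZ'$ and its residual $\bbW'$ need not be disjoint, so one should invoke the general liaison formula for Hilbert functions there.
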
 
\begin{proof}
Proceed by contradiction and suppose $(E_1,E_2)$ does not define a complete intersection. The structure of the Hilbert function, together with the genericity of $E_2$, implies that $I(\bbZ)_{b} = (E_1)_b + \langle E_2, E_2'\rangle$. If $(E_1,E_2)$ does not define a complete intersection, then $E_1,E_2,E_2'$ have a common factor; let $G = \gcd(E_1,E_2,E_2')$, and let $c = \deg G$. Define $\bbZ'$ to be the residue of $\bbZ$ with respect to $G$, that is $I(\bbZ') = I(\bbZ) : (G)$. Write
\[
E_1 = GF_1, \quad E_2 = GF_2, \quad E_2' = GF_2'.
\]
Then $I(\bbZ') \supseteq (F_1,F_2,F_2')$ and $(F_1,F_2)$ is a complete intersection ideal whose vanishing set is strictly larger than $\bbZ'$. This implies that $Dh_{\bbZ'}(a-c+b-c-3) = 0$, because the regularity of $\bbZ'$ must be strictly smaller than the one of a complete intersection $(a-c,b-c)$, which is $a-c+b-c -2$, see, e.g., \cite[Ch. 17]{EisCA}.

Since $\bbZ'$ imposes independent conditions in degree $a+b-2c-3$, the subspace defined by $G\cdot I(\bbZ')_{a+b-2c-3} \subseteq I(\bbZ)_{a+b-c-3}$ separates the points of $\bbZ'$; in particular $\bbZ$ does not satisfy the Cayley-Bacharach property in degree $a+b-c-3$. Since $c > 0$, we conclude that $\bbZ$ does not satisfy the Cayley-Bacharach property in degree $a+b-4$, in contradiction with the hypothesis. 
\end{proof}

\section{First cases}\label{sec: first cases}

In this section, we study the  analog of~\Cref{thm: main} and~\Cref{thm: main overcomplete} in the case of binary forms, that is when $n=1$, or ternary cubics, that is when $(n,k) = (2,1)$. These cases are in a way simpler to study, but require a more technical approach, which does not entirely follow into the framework of the Hilbert function analysis of~\Cref{sec: main proof}. We first prove a general result, in any number of variables.
\begin{proposition}\label{prop: any vars}
    Let $k,n \geq 1$. Let $f = (x_0 \cdots x_n)^k + \lambda \ell^{(n+1)k}$ for a linear form $\ell = a_0 x_0 + \cdots + a_n x_n$ with $\lambda \neq 0$ and $a_0 \cdots a_n \neq 0$. Then there exists a unique $\lambda_0$ such that $\rmR(f) = (k+1)^n - 1$ if $\lambda = \lambda_0$ and in this case the minimal decomposition of $f$ is unique. If $\lambda \neq \lambda_0$ then $\rmR(f) = (k+1)^n$.
\end{proposition}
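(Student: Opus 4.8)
The plan is to reduce the statement to the known results on the monomial $g = (x_0\cdots x_n)^k$ recorded in \Cref{prop: monomial results}, exploiting that $f = g + \lambda \ell^{(n+1)k}$ differs from $g$ by a single $d$-th power, where $d = (n+1)k$. Write $P = [\ell] \in \bbP V$ for the point determined by $\ell$; the hypothesis $a_0\cdots a_n \neq 0$ says exactly that all coordinates of $P$ are nonzero. The first step is to produce the candidate decomposition. Among the complete intersection decompositions of $g$ in \Cref{prop: monomial results}(iii), I would single out the unique one $\bbX_0$ containing $P$: imposing $P \in V(X_0^{k+1}-\alpha_i X_i^{k+1} : i=1,\dots,n)$ forces $\alpha_i = (a_0/a_i)^{k+1}$, which is well defined precisely because the $a_i$ are nonzero — this is the only place the hypothesis $a_0\cdots a_n\neq 0$ is used, and it is essential. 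Expanding $g = \sum_{[\ell_i]\in\bbX_0}\nu_i \ell_i^d$ (all $\nu_i\neq 0$), with $\ell = \ell_{i_0}$ appearing as one summand, gives $f = \sum_{i\neq i_0}\nu_i\ell_i^d + (\nu_{i_0}+\lambda)\ell^d$; this is supported on $\bbX_0$, so $\rmR(f)\le (k+1)^n$, and it has length $(k+1)^n-1$ exactly when $\lambda = -\nu_{i_0}$. I then set $\lambda_0 := -\nu_{i_0}$, which is nonzero since $\nu_{i_0}\neq 0$.

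The key step is the lower bound, via a short apolarity observation: for \emph{any} Waring decomposition $\bbX$ of $f$, the set $\bbX\cup\{P\}$ is a decomposition of $g$. Indeed, if $D\in I(\bbX\cup\{P\}) = I(\bbX)\cap I(\{P\})$, then $D\contract f = 0$ because $I(\bbX)\subseteq\Ann(f)$, and $D\contract\ell^d = 0$ because $I(\{P\})\subseteq\Ann(\ell^d)$ (the single point $P$ decomposes $\ell^d$, so \Cref{lem: apolarity} applies), whence $D\contract g = D\contract f - \lambda\, D\contract \ell^d = 0$; thus $I(\bbX\cup\{P\})\subseteq\Ann(g)$ and \Cref{lem: apolarity} gives the claim. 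Therefore $\#(\bbX\cup\{P\})\ge \rmR(g) = (k+1)^n$, and since $\#(\bbX\cup\{P\})\le \#\bbX+1$ we conclude $\#\bbX\ge (k+1)^n-1$. Hence $\rmR(f)\ge (k+1)^n-1$ with no restriction on $\lambda$.

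It remains to pin down equality and uniqueness. If $\rmR(f) = (k+1)^n-1$, pick a minimal $\bbX$; the inequalities above become equalities, so $\#(\bbX\cup\{P\}) = (k+1)^n$, forcing $P\notin\bbX$ and making $\bbX\cup\{P\}$ a \emph{minimal} decomposition of $g$. By \Cref{prop: monomial results}(iii) it is one of the complete intersections, and containing $P$ it must be $\bbX_0$; hence $\bbX = \bbX_0\setminus\{P\}$, which is unique. Conversely, $\bbX_0\setminus\{P\}$ is a decomposition of $f$ if and only if the coefficient $\nu_{i_0}+\lambda$ in the expansion above vanishes, i.e. if and only if $\lambda = \lambda_0$; here one uses that the powers $\{\ell_i^d : [\ell_i]\in\bbX_0\}$ are linearly independent — which itself follows from $\#\bbX_0 = \rmR(g)$, since a dependence would yield a shorter decomposition of $g$ — so that the expansion of $f$ over $\bbX_0$ is forced. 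Combining: for $\lambda = \lambda_0$ we get $\rmR(f) = (k+1)^n-1$ with unique minimal decomposition $\bbX_0\setminus\{P\}$, and for $\lambda\neq\lambda_0$ we get $\rmR(f)\ge (k+1)^n$, hence $=(k+1)^n$ by the upper bound from Step 1.

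I do not expect a deep obstacle: given \Cref{prop: monomial results} the argument is elementary. The points requiring the most care are the bookkeeping in the apolarity step (carefully tracking whether $P\in\bbX$, which is what separates the values $(k+1)^n-1$ and $(k+1)^n$), and not skipping the justification that the $d$-th powers over $\bbX_0$ are linearly independent, since both the definition of $\lambda_0$ through the coefficient $\nu_{i_0}$ and the converse direction in the last step rely on it.
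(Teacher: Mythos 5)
Your proposal is correct and follows essentially the same route as the paper: both arguments identify the unique minimal decomposition of $g=(x_0\cdots x_n)^k$ containing $[\ell]$ via \Cref{prop: monomial results}(iii) (you write the $\alpha_i=(a_0/a_i)^{k+1}$ explicitly, the paper describes the same ideal as the hyperplane of $\Ann(g)_{k+1}$ vanishing at $\ell$), define $\lambda_0$ as minus the coefficient of $\ell^{(n+1)k}$ in that expansion, and obtain the lower bound and uniqueness by adjoining $[\ell]$ to a putative decomposition of $f$ to produce a decomposition of $g$. Your explicit apolarity verification and the linear-independence remark are just fuller write-ups of steps the paper leaves implicit.
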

\begin{proof}
  Let $g = (x_0 \cdots x_n)^k $ and define 
    \[
    J_{k+1} = \{ F \in \Ann(g)_{k+1} : F(\ell) = 0\} \subseteq \Ann(g)_{k+1}.
    \]
    By~\Cref{prop: monomial results}, part (i), we have $\Ann(g)_{k+1} = \langle X_0^{k+1} \vvirg X_n^{k+1} \rangle$ and $J_{k+1}$ is a hyperplane in this $n$-dimensional linear space. Since $a_0 \cdots a_n \neq 0$, the ideal $J = (J_{k+1})$ generated by $J_{k+1}$ is a complete intersection of the form of~\Cref{prop: monomial results}, part (iii). In particular, its zero set is a set of $(k+1)^n$ points, which is the unique minimal decomposition of $g$ containing $\ell$.

    Now, let $N = (k+1)^n$ and write $\bbX = \{ \ell_1 \vvirg \ell_N\}$ with $\ell = \ell_1$. We have 
    \[
   g = c_1 \ell^{(n+1)k} + c_2 \ell_2^{(n+1)k} + \cdots + c_N \ell_N^{(n+1)k},
    \]
    for some uniquely determined $c_1 \vvirg c_N$. 
    
    If $\lambda = -c_1$, then $\bbX ' = \{ \ell_2 \vvirg \ell_N\}$ defines a decomposition of $f$ of length $N-1$, showing $\rmR(f) \leq (k+1)^n - 1$. Equality holds, because $\rmR(g) = (k+1)^n$ by~\Cref{prop: monomial results}.

    If $\lambda \neq -c_1$, then $\rmR(f) = (k+1)^n$; the decomposition $\bbX$ with coefficients $(c_1 + \lambda),c_2 \vvirg c_N$ is indeed a decomposition of length $(k+1)^n$. Suppose $\bbY$ is a decomposition of length $N-1$; then $\bbY \cup \{ [\ell]\}$ would define a decomposition of $g$ of length $N$ containing $\ell$ and the argument above guarantees that $\bbX = \bbY \cup \{ [\ell]\}$, yielding a contradiction. This same argument shows that the decomposition $\bbX' = \bbX \setminus \{ [\ell]\}$ is the unique decomposition of $f$ in the case $\lambda = -c_1$.
    \end{proof}

\subsection{Binary forms}\label{sec: binary forms}
In the case of binary forms $V$ is a vector space with $\dim V = 2$. We use the notation $x,y$ for the variables spanning $V$ and $X,Y$ for their dual variables in $V^*$. Fix an integer $k$ and a linear form $\ell = ax + by$, and consider the polynomial
\[
f = x^ky^k + \ell^{2k}.
\]
Observe that if $k=1$, then $f = xy + \ell^2$ is a binary quadric with associated matrix
\[
M_f = \left[ \begin{array}{cc}
a^2 & ab + \frac{1}{2} \\
 ab + \frac{1}{2} & b^2 \\
\end{array}\right].
\]
In particular $\rmR(f) = \rank ( M_f)$, showing that $\rmR(f) = 1$ if and only if $\det M_f = 0$, or equivalently $ab = - \frac{1}{4}$. This completely characterizes the situation for binary quadrics. In particular, notice that if $ab = 0$ then $\rmR(f) = 2$.

Apolarity for binary forms yields a complete solution for the Waring problem, which dates back to \cite{Sylv} and was rediscovered in \cite{ComaSeigu}. We summarize it in the following result in the case relevant for this section. 
\begin{proposition}\label{prop: sylvester special} 
    Let $f \in S^d V$ be a binary form. Then $\Ann(f) = (F_1,F_2)$ is a complete intersection with $\deg(F_1) + \deg(F_2) = d+2$. Suppose $\deg(F_1) \leq \deg(F_2)$; then the following holds: 
    \begin{itemize}
        \item if $\Ann(f) _{\deg F_1}$ contains a square-free element, then $\rmR(f) = \deg F_1$ and any square-free element of $\Ann(f) _{\deg F_1}$ defines a decomposition of $f$;
        \item if $\Ann(f) _{\deg F_1}$ contains no square-free elements, then $\Ann(f) _{\deg F_1} = \langle F_1 \rangle$, $\rmR(f) = \deg F_2$ and any square-free elements of $\Ann(f) _{\deg F_2}$ defines a decomposition of $f$.
    \end{itemize}
\end{proposition}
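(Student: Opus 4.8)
The plan is to reconstruct the classical Sylvester theorem through apolarity, separating the complete-intersection structure of $\Ann(f)$ from the rank computation. First I would show that $\Ann(f) = (F_1, F_2)$ is a complete intersection. The apolar algebra $A = \Sym V^*/\Ann(f)$ is graded Artinian, and Macaulay duality (nondegeneracy of the contraction pairing $S^p V^* \times S^{d-p}V^* \to \bbC$ induced by $\contract f$) shows that $A$ is Gorenstein with one-dimensional socle concentrated in degree $d = \deg f$. Since $\Sym V^* = \bbC[X,Y]$, the ideal $\Ann(f)$ has codimension $2$, and a graded Artinian Gorenstein ideal of codimension $2$ is a complete intersection (the codimension-$2$ case of the structure theorem for Gorenstein ideals; see \cite{EisCA}). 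Writing $d_1 = \deg F_1 \leq d_2 = \deg F_2$, the socle degree of a complete intersection of type $(d_1,d_2)$ in $\bbC[X,Y]$ equals $d_1 + d_2 - 2$; matching with the socle degree $d$ of $A$ gives $d_1 + d_2 = d + 2$.

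Next I would translate the rank into a statement about square-free apolar forms. In $\bbP^1$ a reduced set $\bbX$ of $r$ points is exactly the zero locus of a square-free binary form $F$ of degree $r$, with $I(\bbX) = (F)$. By the Apolarity Lemma (\Cref{lem: apolarity}), $\bbX$ is a decomposition of $f$ if and only if $F \in \Ann(f)$, in which case the $r$ distinct roots of $F$ yield an expression $f = \sum_i \lambda_i \ell_i^d$. Hence $\rmR(f)$ equals the least degree of a square-free element of $\Ann(f)$, and every square-free element of $\Ann(f)$ of that least degree defines a minimal decomposition. This already establishes the two ``defines a decomposition'' clauses once the relevant degrees are identified.

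Then I would read off the degrees from the complete-intersection structure. Since $d_1 \leq d_2$, one has $\Ann(f)_t = 0$ for $t < d_1$, and for $d_1 \leq t < d_2$ one has $\Ann(f)_t = F_1 \cdot (\Sym V^*)_{t-d_1}$, so every element in those degrees is a multiple $G F_1$ of $F_1$ and is therefore square-free only if $F_1$ is. In the first case, where $\Ann(f)_{d_1}$ contains a square-free form, this forces $F_1$ itself to be square-free (when $d_1 < d_2$ the space $\Ann(f)_{d_1} = \langle F_1\rangle$ is one-dimensional), and since there is nothing in lower degree we get $\rmR(f) = d_1$. In the second case, where $\Ann(f)_{d_1}$ contains no square-free form, necessarily $d_1 < d_2$ and $\Ann(f)_{d_1} = \langle F_1\rangle$ with $F_1$ non-reduced; then no square-free apolar form exists in any degree below $d_2$, whence $\rmR(f) \geq d_2$.

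The crux, and the step I expect to be the main obstacle, is producing a square-free element in degree $d_2$ to conclude $\rmR(f) = d_2$. For this I would examine the linear system $\Ann(f)_{d_2} = F_1 \cdot (\Sym V^*)_{d_2 - d_1} + \langle F_2\rangle$ on $\bbP^1$. It is base-point-free: a common zero of all its members would be a common zero of $F_1$ and $F_2$, impossible since $(F_1,F_2)$ is a complete intersection with empty zero locus in $\bbP^1$. As this system has projective dimension at least $1$ and $\bbP^1$ is smooth, Bertini's theorem in characteristic $0$ guarantees that its general member is smooth, hence reduced, hence square-free; this supplies a decomposition of length $d_2$ and gives $\rmR(f) = d_2$. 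I would finally check the boundary behaviour $d_1 = d_2$: there the base-point-free pencil $\langle F_1, F_2\rangle$ contains a square-free member by the same Bertini argument, so one is always in the first case, consistently with the hypothesis $\Ann(f)_{d_1} = \langle F_1\rangle$ being invoked only in the second.
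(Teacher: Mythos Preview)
Your argument is correct and complete. The paper does not actually prove this proposition: it states it as a classical fact, citing Sylvester and Comas--Seiguer, and only remarks afterward on why the formulation in terms of $\Ann(f)_{\deg F_1}$ (rather than ``$F_1$ square-free'') is convenient when $\deg F_1 = \deg F_2$. So there is no paper proof to compare against; you have supplied what the paper outsources to the literature.

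A few small comments on presentation. Your sentence ``this forces $F_1$ itself to be square-free'' is only literally true when $d_1 < d_2$; when $d_1 = d_2$ a square-free element of $\Ann(f)_{d_1}$ need not be $F_1$ up to scalar. You acknowledge this with the parenthetical and with the final paragraph on the boundary case, but the logic would read more cleanly if you simply said: a square-free element of $\Ann(f)_{d_1}$, together with $\Ann(f)_t = 0$ for $t < d_1$, immediately gives $\rmR(f) = d_1$, without any claim about $F_1$ specifically. Also, invoking Bertini on $\bbP^1$ is correct but heavier than necessary: since $\gcd(F_1,F_2)=1$, for any root $p$ of $F_1$ one can pick $H \in S^{d_2-d_1}V^*$ with $HF_1$ vanishing to the same order as $F_2$ at each repeated root of $F_2$ (these are disjoint from the roots of $F_1$), and then $F_2 - HF_1$ has strictly lower multiplicity there; iterating, or a direct discriminant-locus argument (the non-square-free polynomials form a hypersurface, and a base-point-free pencil cannot lie entirely in it since its members have no common root), gives the same conclusion more elementarily. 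But this is a matter of taste, not a gap.
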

Usually~\Cref{prop: sylvester special} is stated by saying that if $F_1$ is square-free then $\rmR(f) = \deg(F_1)$, otherwise $\rmR(f) = \deg(F_2)$. The statement given above is more explicit and convenient for our setting, because, in the cases of interest for us, the apolar ideal of $f$ has both generators in the same degree.

\Cref{prop: binary monomial} and~\Cref{thm: binary result} below are the analog of~\Cref{thm: main overcomplete} and~\Cref{thm: main} in the case of binary forms. 
\begin{proposition}\label{prop: binary monomial}
    The monomial $g = x^k y^k$ has irredundant decompositions of length $k+2$.
\end{proposition}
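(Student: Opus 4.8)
The plan is to exhibit an explicit set of $k+2$ points of $\bbP^1$ and to verify, via the Apolarity Lemma, that it is an irredundant decomposition of $g = x^k y^k$. Throughout I will use the description of $g$ provided by~\Cref{prop: monomial results} in the case $n = 1$: the apolar ideal is $\Ann(g) = (X^{k+1}, Y^{k+1})$, the Waring rank is $\rmR(g) = k+1$, and every minimal decomposition of $g$ is the zero locus of a form $X^{k+1} - \alpha Y^{k+1}$ with $\alpha \neq 0$. In particular, no minimal decomposition of $g$ contains the point $[x]$ or the point $[y]$, since $X^{k+1} - \alpha Y^{k+1}$ with $\alpha \neq 0$ does not vanish at $[1:0]$ or at $[0:1]$.

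First I would record that the degree $k+2$ component of the apolar ideal is
\[
\Ann(g)_{k+2} = X^{k+1}\cdot S^1 V^* + Y^{k+1}\cdot S^1 V^* = \langle X^{k+2},\, X^{k+1}Y,\, XY^{k+1},\, Y^{k+2}\rangle,
\]
and then pick the element $F = X^{k+1}Y - XY^{k+1} = XY(X^k - Y^k)$, which lies in $\Ann(g)_{k+2}$ (as $X^{k+1}Y \in (X^{k+1})$ and $XY^{k+1} \in (Y^{k+1})$). This $F$ is square-free: its vanishing set consists of the $k+2$ distinct points $[x]$, $[y]$, and $[\zeta x + y]$ with $\zeta^k = 1$. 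Call $\bbX$ this set; since $I(\bbX) = (F) \subseteq \Ann(g)$, the Apolarity Lemma (\Cref{lem: apolarity}) shows that $\bbX$ is a Waring decomposition of $g$ of length $k+2$.

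It then remains to prove that $\bbX$ is irredundant, that is, that no proper subset $\bbX' \subsetneq \bbX$ satisfies $I(\bbX') \subseteq \Ann(g)$. A subset of cardinality at most $k$ is ruled out directly because $\rmR(g) = k+1$. A subset $\bbX'$ of cardinality $k+1$ with $I(\bbX') \subseteq \Ann(g)$ would be, by~\Cref{lem: apolarity}, a minimal decomposition of $g$, hence, by the description recalled above, it would contain neither $[x]$ nor $[y]$; but $\bbX'$ is obtained from $\bbX$ by deleting exactly one point, and $\bbX$ contains both $[x]$ and $[y]$, so $\bbX'$ must still contain at least one of them, a contradiction. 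This shows that $\bbX$ is an irredundant decomposition of $g$ of length $k+2$.

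I do not anticipate a genuine obstacle: the proof is a short and direct verification. The only point that takes a little thought is the choice of the form $F$, which must simultaneously be square-free, lie in $\Ann(g)_{k+2}$, and have the property that deleting any one of its linear factors yields a form that is no longer in $\Ann(g)$. Anchoring the decomposition at the two coordinate points $[x]$ and $[y]$ — which by~\Cref{prop: monomial results} can never occur in a rank decomposition of $g$ — is precisely what makes this last requirement automatic.
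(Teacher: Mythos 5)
Your proof is correct. The construction is essentially the paper's: your $F = X^{k+1}Y - XY^{k+1}$ is the special case $L_1 = Y$, $L_2 = -X$ of the general element $L_1X^{k+1} + L_2Y^{k+1}$ of $\Ann(g)_{k+2}$ that the paper works with, and you verify square-freeness explicitly where the paper leaves it implicit. The one genuine difference is the irredundancy argument. The paper observes that, since $X^{k+1}$ and $Y^{k+1}$ admit no linear syzygy, such an $F$ with $L_1,L_2$ independent is not divisible by any element of $\Ann(g)_{k+1}$, so no $(k+1)$-point subset of $\bbX$ can have its ideal inside $\Ann(g)$. You instead invoke the classification in \Cref{prop: monomial results}(iii) to note that no rank decomposition of $g$ contains $[x]$ or $[y]$, while every $(k+1)$-point subset of your $\bbX$ still contains one of them. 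Both arguments are sound; the paper's divisibility argument applies uniformly to the whole family of admissible $F$, whereas yours is anchored to the specific choice of coordinate points but is arguably more transparent, resting only on the stated classification of minimal decompositions rather than on a syzygy observation.
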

\begin{proof}
    By~\Cref{prop: monomial results}, the apolar ideal of $g$ is the complete intersection $\Ann(g) = (X^{k+1},Y^{k+1})$. A decomposition $\bbX$ of length $k+2$ is defined by a principal ideal $I(\bbX) = (F)$ and by~\Cref{lem: apolarity} we have that $F \in \Ann(g)_{k+2}$; therefore $F = L_1 X^{k+1} + L_2 Y^{k+1}$ for linear forms $L_1,L_2 \in V^*$. 
    
Since the monomials $X^{k+1} , Y^{k+1}$ have no linear syzygies, if $L_1, L_2$ are linearly independent, then $F$ is not a multiple of a polynomial of the form $X^{k+1} + \alpha Y^{k+1}$. In particular, $F$ is not divisible by an element of $\Ann(g)_{k+1}$, which guarantees that $\bbX$ is irredundant. 
\end{proof}

\begin{theorem}\label{thm: binary result}
    Let $k \geq 2$. Let $f = x^ky^k + \lambda \ell^{2k}$ for a linear form $\ell = a x + by$ and $\lambda \neq 0$. Then 
    \begin{itemize}
        \item if $ab \neq 0$, there is a unique $\lambda_0$ such that $\rmR(f) = k$ if $\lambda = \lambda_0$ and $\rmR(f) = k+1$ if $\lambda \neq \lambda_0$;
        \item if $ab = 0$ then $\rmR(f) = k+1$. 
    \end{itemize}
    \end{theorem}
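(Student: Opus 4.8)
\textbf{The first item} is immediate: specializing \Cref{prop: any vars} to $n=1$ (with $x_0=x$, $x_1=y$, $a_0=a$, $a_1=b$) gives $(k+1)^n-1=k$ and $(k+1)^n=k+1$, which is exactly the claim when $ab\neq 0$. So all the content is in the second item, $ab=0$. After the linear change of coordinates swapping $x$ and $y$ if necessary, we may assume $b=0$, so $\ell=ax$ with $a\neq 0$; putting $\mu=\lambda a^{2k}\neq 0$ it suffices to prove $\rmR(f)=k+1$ for $f=x^ky^k+\mu x^{2k}=x^k(y^k+\mu x^k)$, a product of $k+1$ pairwise distinct linear forms. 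I would argue through Sylvester's description of the apolar ideal (\Cref{prop: sylvester special}): since $\deg f=2k$, we have $\Ann(f)=(F_1,F_2)$, a complete intersection with $\deg F_1\le\deg F_2$ and $\deg F_1+\deg F_2=2k+2$.

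\textbf{Step 1: $\cat_k(f)$ is invertible.} Applying the monomial basis $X^jY^{k-j}$, $0\le j\le k$, of $S^kV^*$ to $f$: the summand $x^ky^k$ produces a nonzero scalar multiple of $x^{k-j}y^j$, while $\mu x^{2k}$ contributes a multiple of $x^k$ only through $X^k$. Hence in the monomial bases the matrix of $\cat_k(f)\colon S^kV^*\to S^kV$ is diagonal except for one extra corner entry, in particular triangular with nonzero diagonal, so invertible; thus $\Ann(f)_k=0$. As $\bbC[X,Y]$ is a domain, multiplying by powers of $X$ gives $\Ann(f)_j=0$ for every $j\le k$; by the Apolarity Lemma (\Cref{lem: apolarity}) this forces $\rmR(f)\ge k+1$, and it forces $\deg F_1\ge k+1$, which together with $\deg F_1\le k+1$ gives $\deg F_1=\deg F_2=k+1$.

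\textbf{Step 2: a squarefree element in degree $k+1$.} Computing $X^{k+1}\contract f=\mu\tfrac{(2k)!}{(k-1)!}\,x^{k-1}$ and $(XY^k)\contract f=k\cdot k!\,x^{k-1}$ shows that
\[
F:=(k\cdot k!)\,X^{k+1}-\mu\tfrac{(2k)!}{(k-1)!}\,XY^k=X\Bigl((k\cdot k!)\,X^k-\mu\tfrac{(2k)!}{(k-1)!}\,Y^k\Bigr)\in\Ann(f)_{k+1}.
\]
The coefficient of $X^{k+1}$ is a nonzero constant and that of $XY^k$ is a nonzero multiple of $\mu$; since $\mu\neq 0$, the factor $(k\cdot k!)X^k-\mu\tfrac{(2k)!}{(k-1)!}Y^k$ has $k$ distinct roots, none equal to the root of $X$, so $F$ is squarefree. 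By the first case of \Cref{prop: sylvester special}, applied with the squarefree element $F$ in the minimal degree $\deg F_1=k+1$, we conclude $\rmR(f)=k+1$.

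\textbf{Main obstacle.} The only genuine computation is the non-degeneracy of $\cat_k(f)$ in Step 1; the rest is formal apolarity bookkeeping. The point to watch is the squarefreeness of $F$ in Step 2, which is precisely where the hypothesis $\lambda\neq 0$ (equivalently $\mu\neq 0$) is used. One should also note the conclusion is rigid: even if $\Ann(f)_{k+1}$ contained no squarefree element, the second case of \Cref{prop: sylvester special} would still give $\rmR(f)=\deg F_2=k+1$. The contrast with the first item is that when $ab=0$ the pure power $Y^{k+1}$ (respectively $X^{k+1}$) lies in $\Ann(f)_{k+1}$ for every $\lambda$, and it is this that prevents the rank from dropping to $k$.
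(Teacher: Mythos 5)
Your proof is correct and follows essentially the same route as the paper: reduce the first item to \Cref{prop: any vars}, normalize $\ell$ to a multiple of $x$, and apply Sylvester's apolarity (\Cref{prop: sylvester special}) after exhibiting the square-free apolar form $X\bigl(cX^k - c'\mu Y^k\bigr)$ of degree $k+1$, which is exactly the paper's generator $X^{k+1}-\binom{2k}{k}XY^k$ up to scaling. The only extra content is your explicit verification via $\cat_k(f)$ that both generators of $\Ann(f)$ sit in degree $k+1$, which the paper leaves implicit; your side remark that the second bullet of \Cref{prop: sylvester special} would also apply is slightly off (its hypothesis cannot hold when both generators have the same degree), but this does not affect the argument.
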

\begin{proof}
The first part of the statement follows from~\Cref{prop: any vars}. 

For the second part, suppose $ab= 0$. After a suitable change of coordinates, we may assume $f = x^ky^k + x^{2k} = x^k (x^k + y^k)$. We have $\Ann(f) = (Y^{k+1}, X^{k+1} - \binom{2k}{k} Y^kX)$. By~\Cref{prop: sylvester special}, we conclude $\rmR(f) = k+1$.
\end{proof}

\subsection{Ternary cubics} 
We use the notation $x,y,z$ for the variables spanning $V$ and $X,Y,Z$ for their dual variables in $V^*$. We analyze the case $f = xyz + \ell^3$ for a linear form $\ell$. 

By~\Cref{prop: monomial results}, we have $\rmR(xyz) = 4$, which is also the generic rank in $\bbP S^3 V$. We have the following simple result.
\begin{proposition}
Let $f = xyz + \lambda \ell^3$ for a linear form $\ell = ax+by+cz$ and $\lambda \neq 0$. Then 
\begin{itemize}
    \item if $abc \neq 0$, there is a unique $\lambda_0$ such that $\rmR(f) = 3$ if $\lambda = \lambda_0$ and $\rmR(f) = 4$ if $\lambda \neq \lambda_0$;
    \item if $abc = 0$, then $\rmR(f) = 4$.
\end{itemize}
\end{proposition}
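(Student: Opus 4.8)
The first bullet is the case $n = 2$, $k = 1$ of~\Cref{prop: any vars} (where $(k+1)^n - 1 = 3$ and $(k+1)^n = 4$): it already gives the distinguished value $\lambda_0$, the two possible ranks, and uniqueness of the minimal decomposition, so nothing further is needed there. The content is the second bullet, which I would prove by an apolarity analysis of sets of points in $\bbP^2$, in the spirit of~\Cref{prop: sylvester special}.

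Assume $abc = 0$; by the $S_3$-symmetry of $xyz$ we may assume $c = 0$, so $\ell = ax + by$ with $(a,b) \neq (0,0)$. A direct catalecticant computation for $f = xyz + \lambda(ax+by)^3$ gives $\Ann(f)_1 = 0$ (the partials $\partial_x f,\partial_y f,\partial_z f$ are linearly independent, since $\partial_z f = xy$ is not a combination of the other two), shows that $\cat_2(f)\colon S^2 V^* \to S^1 V$ is surjective, and yields
\[
\Ann(f)_2 = \bigl\langle\, Z^2,\ X^2 - 6\lambda a^2 Z(bX+aY),\ Y^2 - 6\lambda b^2 Z(bX+aY)\,\bigr\rangle .
\]
The decisive feature is that this net of conics has \emph{empty base locus}: $Z^2 = 0$ forces $Z = 0$, and then the remaining two generators force $X = Y = 0$.

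For the lower bound, suppose $\rmR(f) = 3$. By~\Cref{lem: apolarity} there is a set $\bbX$ of three points with $I(\bbX) \subseteq \Ann(f)$. Since $\dim I(\bbX)_2 = 6 - \HF_\bbX(2) \geq 3 = \dim \Ann(f)_2$, the inclusion $I(\bbX)_2 \subseteq \Ann(f)_2$ is forced to be an equality, hence $\bbX \subseteq V(\Ann(f)_2) = \emptyset$, a contradiction. This is precisely where the hypothesis $abc = 0$ is used: for $abc \neq 0$ the rank-$3$ case does occur, at $\lambda = \lambda_0$, and then the analogous net of conics acquires base points.

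For the upper bound — the crux of the argument — I would produce two conics $Q_1, Q_2 \in \Ann(f)_2$ with no common component whose common zero locus is a set $\bbX$ of four distinct reduced points; then $I(\bbX) = (Q_1, Q_2)$ is generated in degree $2$, so $I(\bbX) = R\cdot\langle Q_1, Q_2\rangle \subseteq R\cdot\Ann(f)_2 \subseteq \Ann(f)$ (as $\Ann(f)$ is an ideal), and~\Cref{lem: apolarity} gives $\rmR(f) \leq 4$; with the lower bound this yields $\rmR(f) = 4$ whenever $abc = 0$. Existence of $Q_1, Q_2$ follows from the empty base locus and is checked on the two normal forms to which $f$ reduces after a linear change of coordinates and rescaling: for $ab = 0$ one gets $xyz + x^3$, where $\Ann(f)_2 = \langle Y^2, Z^2, X^2 - 6YZ\rangle$ and the conics $Q_1 = Y^2 - Z^2$, $Q_2 = X^2 - 6YZ$ meet in the four distinct points $[\pm\sqrt 6 : 1 : 1]$, $[\pm\sqrt{-6} : -1 : 1]$; for $ab \neq 0$ one gets $xyz + (x+y)^3$, where after the substitution $u = x+y$ one may take $Q_1 = X^2 - 6ZU$, $Q_2 = Z^2 + U^2 - 2XU$. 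The reason this step is delicate is that one cannot appeal to ``the generic rank is $4$'' (the point $[f] \in \bbP S^3 V$ is special), nor absorb $\lambda\ell^3$ into a minimal decomposition of $xyz$: the point $[\ell] = [a:b:0]$ has a vanishing coordinate and so, by~\Cref{prop: monomial results}(iii), never belongs to a minimal decomposition of $xyz$ — which is exactly why $abc = 0$ is the hard regime. One must instead build four apolar points by hand and verify reducedness of the resulting complete intersection in both sub-cases.
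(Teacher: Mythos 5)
Your proof is correct and follows the same route as the paper, which reduces to the two normal forms $xyz+x^3$ and $xyz+(x+y)^3$ and leaves the claim $\rmR(f)=4$ to ``an explicit calculation''; you actually carry out that calculation via apolarity (the net $\Ann(f)_2$ has empty base locus, ruling out rank $3$, and a reduced complete intersection of two apolar conics gives the upper bound). The only loose end is that in the case $ab\neq 0$ you assert rather than verify that $Q_1=X^2-6Z(X+Y)$ and $Q_2=Z^2-X^2+Y^2$ meet in four distinct points --- they do (eliminating $X$ yields a quartic in $Y/Z$ with no repeated roots) --- and the phrase ``after the substitution $u=x+y$'' should simply read: set $U=X+Y\in V^*$.
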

\begin{proof}
The first part of the statement follows from~\Cref{prop: any vars}. 

If $abc = 0$, then the desired claim can be verified with an explicit calculation. After possibly changing coordinates, assume $c = 0$, and rescale $x,y,z$ so that either $f = xyz + (x+y)^3$ or $f = xyz + x^3$. In both cases $\rmR(f) = 4$.
\end{proof}

\section{Ternary forms of higher degree}\label{sec: main proof}
This section gives the proof of~\Cref{thm: main overcomplete} and deduces, as a consequence,~\Cref{thm: main}.

Let $x,y,z$ be variables spanning $V$ and $X,Y,Z$ their dual variables. The proof of~\Cref{thm: main} follows from~\Cref{prop: HB liaison}, which characterizes the Hilbert-Burch matrix of a putative irredundant decomposition of $x^k y^kz^k$ of length $(k+1)^2+1$, and from the technical~\Cref{lemma: apolar identities} which in turn yields a contradiction.

\begin{proposition}\label{prop: HB liaison}
Let $g = x^k y^k z^k$ and let $\bbY$ be an irredundant decomposition of $g$ of length $(k+1)^2+1$. Then, the Hilbert-Burch matrix of $\bbY$ is
\[
\mathrm{HB}_\bbY = \left[\begin{array}{ccc}
    F_1 & F_2 & G \\
    L_1 & L_2 & Q
\end{array}\right] \qquad \text{ with } \qquad \begin{array}{ll}
    \deg(F_i) = k, & \deg(G) = k+1, \\
    \deg(L_i) = 1, & \deg(Q) = 2.
\end{array}
\]
Moreover, there is a decomposition $\bbX$ of $g$ of length $(k+1)^2$ disjoint from $\bbY$ and a scheme $\bbL$ of length $k$ contained in a line such that $\bbU = \bbX \cup \bbY \cup \bbL$ is a complete intersection $(k+1, 2k+3)$. 
\end{proposition}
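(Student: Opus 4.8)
The plan is to first pin down the Hilbert function of $\bbY$, then use it to produce the minimal decomposition $\bbX$ and the complete intersection by liaison, and finally read the Hilbert--Burch matrix off the minimal free resolution that the Hilbert function forces. Write $R=\Sym V^*$. By~\Cref{prop: monomial results}(i) we have $\Ann(g)=(X^{k+1},Y^{k+1},Z^{k+1})$, and by~\Cref{lem: apolarity} $I(\bbY)\subseteq\Ann(g)$. The main work, and the main obstacle, is the Hilbert function computation (Step~1): the apolarity constraint, B\'ezout, and Macaulay's growth bounds constrain $D\HF_\bbY$ and $D\HF_{\bbX\cup\bbY}$ very tightly but not uniquely, and the remaining rigidity has to be extracted from the Cayley--Bacharach property of $\bbX\cup\bbY$ (\Cref{lem: CB from AC,lem: CB implies CI}); once the Hilbert functions are in hand, Steps~2--3 are bookkeeping with~\Cref{thm: mapping cone} and with graded Betti numbers.

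\emph{Step 1: $D\HF_\bbY=(1,2,\dots,k+1,\,k+1,\,k-1,k-2,\dots,1)$.} Since $\Ann(g)$ has no form of degree $\le k$, $I(\bbY)_{\le k}=0$, so $D\HF_\bbY$ begins $(1,2,\dots,k+1)$ through degree $k$. Next I would show $\dim I(\bbY)_{k+1}=1$. It is $\le 1$: any $2$-dimensional subspace of $\Ann(g)_{k+1}=\langle X^{k+1},Y^{k+1},Z^{k+1}\rangle$ contains two coprime forms (each $aX^{k+1}+bY^{k+1}+cZ^{k+1}$ is smooth hence irreducible, or a product of $k+1$ distinct lines, or a power of one variable), hence is a complete intersection of type $(k+1,k+1)$ with $(k+1)^2$ points in its zero locus by~\Cref{prop: monomial results}(iii), and then $\bbY$ would sit inside it, contradicting $\#\bbY=(k+1)^2+1$. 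It is $\ge 1$: if $I(\bbY)_{k+1}=0$, pick a minimal decomposition $\bbX$ of $g$ disjoint from $\bbY$ — these form the pairwise disjoint family $\{V(X^{k+1}-\alpha Y^{k+1},X^{k+1}-\beta Z^{k+1}):\alpha,\beta\in\bbC^\times\}$ of~\Cref{prop: monomial results}(iii), only finitely many of which meet the finite set $\bbY$ — so that $W:=\bbX\cup\bbY$ has $2(k+1)^2+1$ points, $I(W)_{k+1}=I(\bbX)_{k+1}\cap I(\bbY)_{k+1}=0$, and $W$ satisfies the Cayley--Bacharach property in degree $3k$ by~\Cref{lem: CB from AC}; since Cayley--Bacharach in degree $d$ forces $W$ to fail to impose independent conditions in degree $d$, hence $\reg(W)\ge 3k+1$, comparing with $\#W$, with $\HF_W\le\HF_\bbX+\HF_\bbY$, and with Macaulay's bounds yields a contradiction. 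Thus $I(\bbY)_{k+1}=\langle F\rangle$, $F=aX^{k+1}+bY^{k+1}+cZ^{k+1}$; as $\bbY$ is not contained in a line (else a linear form lies in $I(\bbY)\subseteq\Ann(g)$), at least two of $a,b,c$ are nonzero, so $F$ is squarefree and $C:=V(F)$ is a reduced plane curve of degree $k+1$ with $\bbY\subseteq C$. Now $D\HF_\bbY$ has a plateau $D\HF_\bbY(k)=D\HF_\bbY(k+1)=k+1$, and B\'ezout on $C$ (a form of degree $e$ in $I(\bbY)$ not divisible by $F$ meets $C$ in $(k+1)e$ points, so cannot contain $\bbY$ when $(k+1)e<(k+1)^2+1$) together with the Cayley--Bacharach analysis of $W$ forces the rest of $D\HF_\bbY$, giving the stated sequence, and $\reg(\bbY)=2k+1$.

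\emph{Step 2: the complete intersection and $\bbL$.} I would now choose a minimal decomposition $\bbX$ of $g$ with $\bbX\cap\bbY=\emptyset$ and $F\in I(\bbX)_{k+1}$; the condition $F\in\langle X^{k+1}-\alpha Y^{k+1},X^{k+1}-\beta Z^{k+1}\rangle$ is one algebraic equation on $(\alpha,\beta)\in(\bbC^\times)^2$ with a one‑dimensional solution set (using that at least two of $a,b,c$ are nonzero), from which one discards the finitely many solutions meeting $\bbY$. Then $W:=\bbX\cup\bbY\subseteq C$, $\#W=2(k+1)^2+1$, $I(W)_{k+1}=\langle F\rangle$, and $W$ satisfies Cayley--Bacharach in degree $3k$. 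B\'ezout on $C$ forces every form of degree $\le 2k+2$ in $I(W)$ to be a multiple of $F$ (as $2(k+1)^2+1>(k+1)(2k+2)$), so $D\HF_W(t)=\min(t+1,k+1)$ for $t\le 2k+2$; with the Cayley--Bacharach property and Macaulay's bounds this pins down
\[
D\HF_W=\Bigl(1,2,\dots,k+1,\ \dots,\ k+1,\ \underset{\substack{\uparrow\\ \text{deg }2k+3}}{k-1}\,,\ k-2,\dots,1\Bigr),
\]
the plateau at height $k+1$ running over degrees $k,\dots,2k+2$. This is exactly the hypothesis of~\Cref{lem: CB implies CI} with $a=k+1$, $b=2k+3$ (note $a+1<b$ and $a+b-4=3k$): for $E_1=F$ and general $E_2=H\in I(W)_{2k+3}$ the ideal $(F,H)$ is a complete intersection. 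Put $\bbU:=V(F,H)$, of type $(k+1,2k+3)$; it contains $W$, so $\bbL:=\bbU\setminus W$ has length $(k+1)(2k+3)-(2(k+1)^2+1)=k$. Since $W$ and $\bbL$ are linked by $\bbU$,~\Cref{thm: mapping cone} gives $D\HF_W(t)+D\HF_\bbL(3k+2-t)=D\HF_\bbU(t)$; substituting $D\HF_W$ and the first‑difference Hilbert function of a $(k+1,2k+3)$‑complete intersection (from its Koszul resolution) yields $D\HF_\bbL=(1,1,\dots,1)$ with $k$ entries, so $\bbL$ lies on a line. This gives the second assertion.

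\emph{Step 3: the Hilbert--Burch matrix.} Points in $\bbP^2$ are arithmetically Cohen--Macaulay, so the graded Betti numbers of $R/I(\bbY)$ are determined up to consecutive cancellation by the numerator of its Hilbert series: writing $Q(s)=\sum_t D\HF_\bbY(t)\,s^t$, the generator degrees $b_i$ and first-syzygy degrees $c_j$ of $I(\bbY)$ satisfy $\sum_i s^{b_i}-\sum_j s^{c_j}=1-(1-s)^2Q(s)$. With the $D\HF_\bbY$ of Step~1 the right-hand side equals $s^{k+1}+2s^{k+2}-s^{k+3}-s^{2k+2}$, so the generators have degrees $k+1,k+2,k+2$ and the first syzygies have degrees $k+3,2k+2$; these degree sets are disjoint for $k\ge 1$, hence there is no cancellation and
\[
0\longrightarrow R(-2k-2)\oplus R(-k-3)\longrightarrow R(-k-1)\oplus R(-k-2)^{\oplus 2}\longrightarrow I(\bbY)\longrightarrow 0
\]
is the minimal free resolution. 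Thus the Hilbert--Burch matrix is $3\times 2$; its entry in the row of a degree-$b_i$ generator and the column of a degree-$c_j$ syzygy has degree $c_j-b_i$, so after reordering rows and columns it takes the displayed form with $\deg F_i=k$, $\deg G=k+1$, $\deg L_i=1$, $\deg Q=2$ (and $F$ is, up to scalar, the minor $F_1L_2-F_2L_1$), completing the proof.
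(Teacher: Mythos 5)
Your outline tracks the paper's in broad strokes (pass to $\bbX\cup\bbY$, use Cayley--Bacharach and \Cref{lem: CB implies CI} to get the $(k+1,2k+3)$ complete intersection, recover $\bbL$ by liaison), but the central step --- pinning down the tails of $D\HF_\bbY$ and $D\HF_{\bbX\cup\bbY}$ --- is asserted rather than proved, and the tools you invoke do not suffice. B\'ezout and Macaulay give $D\HF_{\bbX\cup\bbY}(t)=\min(t+1,k+1)$ for $t\le 2k+2$, and that the tail is non-increasing, positive through degree $3k+1$ (that is $k-1$ entries), and sums to $k(k-1)/2$; but this does not force the tail to be $(k-1,k-2,\dots,1)$: for $k=4$ the tails $(2,2,2)$ or $(4,1,1)$ meet all of these constraints. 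What excludes them is precisely what you omit: each alternative has a plateau at height $h<k+1$ starting in degree $\le 3k$, and \Cref{thm: bgm} then yields a reduced curve of degree $h$ and subsets $\bar{\bbX}\subsetneq\bbX$, $\bar{\bbY}\subseteq\bbY$ with $\langle v_{3k}(\bar{\bbX})\rangle\cap\langle v_{3k}(\bar{\bbY})\rangle=\langle v_{3k}({\bbX})\rangle\cap\langle v_{3k}({\bbY})\rangle\ni[g]$, contradicting $\rmR(g)=(k+1)^2$. The same mechanism is what actually rules out $I(\bbY)_{k+1}=0$; the contradiction you claim from ``$\HF_W\le\HF_\bbX+\HF_\bbY$ and Macaulay's bounds'' does not materialize, since those inequalities are slack in every relevant degree. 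A smaller issue: your B\'ezout count assumes the degree-$e$ form and $F$ share no component, which needs a word when $F=aX^{k+1}+bY^{k+1}$ splits into $k+1$ lines.

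Step 3 is also logically backwards. For points in $\bbP^2$ the Hilbert function determines the graded Betti numbers only up to adding ghost pairs (a minimal generator and a minimal first syzygy in the same degree $j$), so the fact that the degree sets in the alternating-sum-minimal table are disjoint does not preclude the true resolution from being strictly larger, i.e.\ a Hilbert--Burch matrix of size $4\times 3$ or more. The paper sidesteps this by computing the actual minimal free resolution of $\bbY$ through two explicit liaisons (first $\bbX\cup\bbL$ against a single point inside a $(k+1,k+2)$ complete intersection, then $\bbY$ against $\bbX\cup\bbL$ inside $\bbU$) via \Cref{thm: mapping cone}, cancelling the one visibly redundant summand. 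If you keep your route, you must separately rule out extra generators of $I(\bbY)$ in degrees $k+3$ and above.
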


\begin{proof}
    Let $\bbX$ be a decomposition of $g$ of length $(k+1)^2$ disjoint from $\bbY$. This always exists: by~\Cref{prop: monomial results}, part (iii), for any finite set of points there is a decomposition $\bbX$ of $g$ which does not intersect such a finite set. 

We are going to show that $D h_{\bbX \cup \bbY}$ has the following form:
\begin{equation}\label{eqn: XuY final}
Dh_{\bbX \cup \bbY} (t) = \left\{
\begin{array}{ll}
t +1 & \text{ if $t =  0 \vvirg k$ }\\
k+1 & \text{ if $t = k+1 \vvirg 2k+2$} \\
t'+1 & \text{ if $t = 3k+1 -t'$ with $t' = 0 \vvirg k-1$}
\end{array} \right.
\end{equation}
This is displayed in the diagram of~\Cref{fig: h_Z} in the case $k=4$, and it is represented by the union of the blue and red boxes, including the one on a different shade of red. First observe that, since $\bbX$ is a complete intersection $(k+1,k+1)$, we have $D h_{\bbX} = ( 1 \vvirg k , k \vvirg 1)$; this is represented by the blue boxes. Moreover, by assumption $\bbX$ and $\bbY$ are two decompositions of $g$, therefore $[g] \in \langle v_{3k}(\bbX) \rangle  \cap \langle v_{3k}(\bbY) \rangle$: this guarantees that $\bbX \cup \bbY$ does not impose independent conditions in degree $3k$, therefore $D h_{\bbX \cup \bbY}(3k+1) > 0$, which guarantees the presence of at least one red box in degree $3k+1$.
   
 We use~\Cref{thm: bgm} to show that $D h_{\bbX \cup \bbY}$ has no plateaux at height $h < k+1$ starting at degree $t_0 < 3k+1$. Suppose this is not the case, namely that there exists $t_0 \leq 3k$ such that $D h _{\bbX \cup \bbY}(t_0) = D h _{\bbX \cup \bbY}(t_0+1) = h < k+1$; by construction $t_0 > k$, so~\Cref{thm: bgm} applies. We deduce, in particular, that there exists a reduced curve $C$ of degree $h$ such that $D h _{(\bbX \cup \bbY)\cap C}(t) =  D h _{\bbX \cup \bbY}(t)$ for $t \geq t_0$. Let $\bar{\bbX} = \bbX \cap C$ and $\bar{\bbY} = \bbY \cap C$; note that since $\bbX$ has no equations of degree $h$, we have $\bar{\bbX} \subsetneq \bbX$. The condition $D h _{(\bbX \cup \bbY)\cap C}(t) =  D h _{\bbX \cup \bbY}(t)$ for $t \geq 3k+1 > t_0$ guarantees that 
 \[
 \langle v_{3k}(\bar{\bbX}) \rangle \cap \langle v_{3k}(\bar{\bbY}) \rangle = 
 \langle v_{3k}({\bbX}) \rangle \cap \langle  v_{3k}({\bbY}) \rangle;
 \]
therefore, we conclude that $[g] \in \langle v_{3k}(\bar{\bbX}) \rangle$, in contradiction with the minimality of the decomposition $\bbX$. 

This guarantees that there $D h_{\bbX \cup \bbY}$ has no plateaux at height $h < k+1$ starting at degree $t_0 < 3k+1$. We point out that we have not excluded the existence of a plateau starting beyond degree $3k+1$ at height $h$ with $h < Dh _{\bbX \cup \bbY}(3k+1)$. It will turn out that this does not exist either.
     
Since $Dh _{\bbX \cup \bbY}$ has no plateaux at height $h < k+1$ in the range $t = k+1 \vvirg 3k$, and $D h _{\bbX \cup \bbY} (3k+1) \geq 1$, we deduce that if $D h_{\bbX \cup \bbY}(t_0) < k+1$ then $D h_{\bbX \cup \bbY}(t)$ is strictly decreasing for $t =t_0 \vvirg 3k+1$. In particular, 
\begin{align*}
&D h _{\bbX \cup \bbY} (t) \geq k+1 \text{ for $t = k \vvirg 2k+1$ and } \\
&Dh_{\bbX \cup \bbY}(3k+1 -t') \geq t' + 1 \text{ if $t' = 0 \vvirg k$}.
\end{align*}
Pictorially, this shows that all the red and blue boxes of the diagram of~\Cref{fig: h_Z} are contained in $Dh_{\bbX \cup \bbY}$, except possibly the one in a lighter shade in position $(2k+2, k+1)$. In particular, since $\sum_{t \geq 0} Dh_{\bbX \cup \bbY}(t) = \# (\bbX\cup \bbY)$, we obtain that $Dh_{\bbX \cup \bbY}$ coincides indeed with the one of \eqref{eqn: XuY final}, except possibly for one value: informally, we have not yet determined the position of the box in a lighter shade of red.

If such last box is placed in degree $t$ for any $t = 2k+3 \vvirg 3k+2$, that is $Dh_{\bbX \cup \bbY}(2k+1+t') = k+2-t'$ for some $t' = 2 \vvirg k+1$, then $Dh_{\bbX \cup \bbY}$ would have a plateau at height $k+2-t' < k+1$, starting at degree $2k+t' \leq 3k+1$, in contradiction with the previous part of the argument.

If the last box is placed in degree $t$ for any $t = k+2 \vvirg 2k+1$, that is $Dh_{\bbX \cup \bbY}(t) = k+2$ for some $t = k+1 \vvirg 2k+1$, then $Dh_{\bbX \cup \bbY} (k+1) = k+1$, showing that $I(\bbX \cup \bbY)_{k+1} \neq 0$. Let $C$ be a curve defined by an element of $I(\bbX \cup \bbY)_{k+1}$. Since $C$ is a plane curve of degree $k+1$, for $t \geq k+1$, $Dh_{C}(t) = k+1$ and since $\bbX \cup \bbY \subseteq C$ we have $Dh_{\bbX \cup \bbY}(t) \leq k+1$ as well: this yields a contradiction. 

If the last box is placed in degree $k+1$, then $Dh_{\bbX \cup \bbY}$ has a plateau at height $k+1$ starting in degree $k+2$. Using the same argument above, by~\Cref{thm: bgm}, we would determine subsets $\bar{\bbX} \subseteq \bbX$ and $\bar{\bbY} \subseteq \bbY$ such that one of the inclusions is strict and $[g] \in \langle v_{3k} ( \bar{\bbX} ) \rangle \cap \langle v_{3k} ( \bar{\bbY} ) \rangle$. Since one of the two inclusions is strict, this is in contradiction with the irredundancy of $\bbX$ and $\bbY$.

We conclude that the last box should be placed in degree $2k+2$, that is $Dh_{\bbX \cup \bbY}(2k+2) = k+1$. In this way $Dh_{\bbX \cup \bbY}$ has the form of \eqref{eqn: XuY final} and pictorially it corresponds to the union of the blue and red boxes, including the one in a lighter shade, in~\Cref{fig: h_Z}.

\begin{figure}[htp!]
\centering
\begin{tikzpicture}[scale=0.7] 
    \draw[->, thick] (0,0) -- (16,0) node[right] {$t$};
    \draw[->, thick] (0,0) -- (0,7) node[above] {$Dh(t)$};

    \foreach \x in {0,...,15} \draw (\x + 0.5,0) -- (\x + 0.5,-0.1) node[below] {$\x$};
    \foreach \y in {1,...,6} \draw (0,\y - 0.5) -- (-0.1,\y -0.5) node[left] {$\y$};
    
    \foreach \x/\y in {0/1, 1/1, 2/1, 3/1, 4/1, 5/1, 6/1, 7/1, 8/1, 1/2, 2/2, 3/2, 4/2, 5/2, 6/2, 7/2, 2/3, 3/3, 4/3, 5/3, 6/3, 3/4, 4/4, 5/4, 4/5} 
    {
    \fill[draw=black, fill=cyan!50] (\x, \y-1) rectangle (\x+1, \y);
    }
    \foreach \x/\y in {9/1, 10/1, 11/1, 12/1, 13/1, 8/2, 9/2, 10/2, 11/2, 12/2, 7/3, 8/3,
      9/3, 10/3, 11/3, 6/4, 7/4, 8/4, 9/4, 10/4, 5/5, 6/5, 7/5, 8/5, 9/5}
    {
        \fill[draw=black, fill=red!60] (\x, \y-1) rectangle (\x+1, \y);
    }
    \fill[draw=black, fill=red!40] (10, 5 -1) rectangle (11, 5);
    \foreach \y in {1,2,3,4}
    {
        \fill[draw=black, fill=gray!50] (5 - \y + 10, \y-1) rectangle (5 - \y + 10 + 1, \y);
    }
\end{tikzpicture}
      \caption{Representation of $Dh_{\bbU}$ in the case $k=4$. The decomposition $\bbX$ is represented by the blue boxes. The union of the blue and red boxes represent $\bbX \cup \bbY$. The linked scheme $\bbL$ is represented by the gray boxes.} \label{fig: h_Z}
\end{figure}
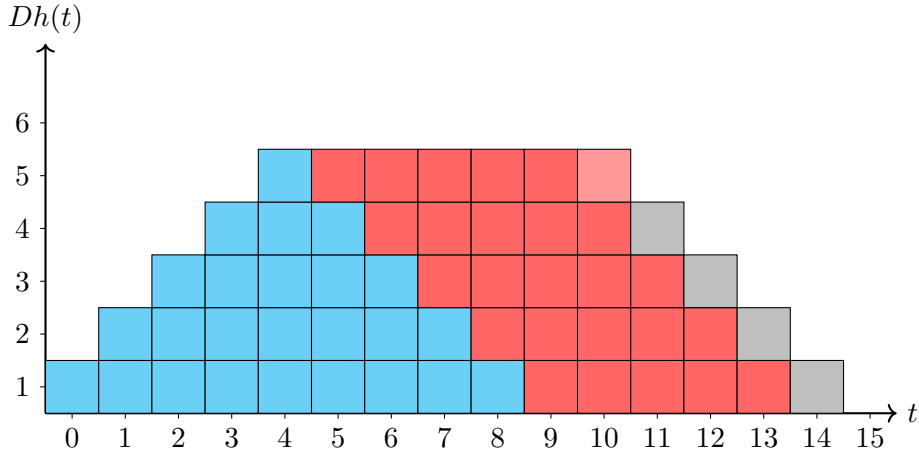

By~\Cref{lem: CB implies CI}, the characterization of $Dh_{\bbX \cup \bbY}$ implies that $I(\bbX \cup \bbY)$ contains a form $E_1$ of degree $k+1$ and a form $E_2$ of degree $2k+3$ such that $(E_1,E_2)$ is a complete intersection of type $(k+1,2k+3)$ containing $\bbX \cup \bbY$. Let $\bbU$ be the scheme defined by this complete intersection and let $\bbL$ be the residual scheme of $\bbX \cup \bbY$, that is the scheme whose ideal is $I(\bbL) = I(\bbU) : I(\bbX \cup \bbY)$. By construction $\deg(\bbL) = (k+1)(2k+3) - 2(k+1)^2 - 1 = k$. By the last statement of~\Cref{thm: mapping cone}, $Dh_{\bbL} = (1 \vvirg 1)$, therefore $\bbL$ is contained in a line. Pictorially $Dh_{\bbU}$ is represented by the union of all boxes in~\Cref{fig: h_Z} and $D h_{\bbL}$ is represented by the sequence defined by the gray boxes, in reversed order. 

We will determine the Hilbert-Burch matrix of $\bbY$ using~\Cref{thm: mapping cone} and the fact that $\bbY$ and $\bbX \cup \bbL$ are linked in $\bbU$. To apply~\Cref{thm: mapping cone}, we need to determine the resolution of $\bbX \cup \bbL$.

Let $I(\bbX) = (E_1, D_1)$ and let $L$ be the equation of the line containing $\bbL$. Then $\bbX \cup \bbL$ is contained in the complete intersection defined by $(E_1, LD_1)$ of degree $(k+1)(k+2)$; the residual scheme defined by $(E_1,LD_1) : I({\bbX \cup \bbL})$ is a single point $\{q \}$. Therefore $\bbX \cup \bbL$ and the point $\{q \}$ are linked in a complete intersection $(k+1,k+2)$. The minimal free resolution of $\{q\}$ is simply 
\[
0 \to R(-2) \to R(-1)^{\oplus 2} \to I(\{q\}) \to 0
\]
because every point is defined by the intersection of two lines. By~\Cref{thm: mapping cone}, a resolution of $\bbX \cup \bbL$ is
\[
0 \to R(-2k -2)^{\oplus 2} \to R(-2k-1) \oplus R(-k-1) \oplus R(-k-2) \to I(\bbX \cup \bbL) \to 0,
\]
and since all twists in the first syzygy module are higher than the ones in the $0$-th syzygy module, we deduce that this resolution is minimal. Now, since $\bbX \cup \bbL$ and $\bbY$ are linked in the complete intersection $\bbU$ of type $(k+1, 2k+3)$, applying again~\Cref{thm: mapping cone}, we obtain a resolution for $\bbY$ of the form 
\[
0 \to R(-k-3)\oplus R(-2k-2)\oplus R(-2k-3)\to R(-k-1) \oplus R(-k-2)^{\oplus 2} \oplus R(-2k-3) \to I(\bbY) \to 0.
\]
This resolution is not minimal. To see this, note that if the generator of degree $2k+3$ was minimal, then the syzygies involving it would be in degree higher than $2k+3$; but there are no syzygies of degree higher than $2k+3$. Therefore the minimal free resolution of $\bbY$ is 
\[
0 \to R(-k-3)\oplus R(-2k-2)\to R(-k-1) \oplus R(-k-2)^{\oplus 2} \to I(\bbY) \to 0.
\]
This yields the desired Hilbert-Burch matrix of $\bbY$.
    \end{proof}

\begin{lemma}\label{lemma: apolar identities}
 Suppose 
\[
\mathrm{HB}_\bbY = \left[\begin{array}{ccc}
    F_1 & F_2 & G \\
    L_1 & L_2 & Q
\end{array}\right] \qquad \text{ with } \qquad \begin{array}{ll}
    \deg(F_i) = k, & \deg(G) = k+1, \\
    \deg(L_i) = 1, & \deg(Q) = 2
\end{array}
\]
is a Hilbert-Burch matrix of a decomposition $\bbY$ of $g = x^k y^k z^k$ of length $(k+1)^2+1$. Then $\bbY = \bbX \cup \{ p_0 \}$ where $p_0$ is the point defined by $(L_1,L_2)$ and $\bbX$ is a decomposition of $g$ of length $(k+1)^2$.
\end{lemma}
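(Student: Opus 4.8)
The plan is to present $I(\bbY)$ through its Hilbert--Burch matrix and to locate $p_0$ by means of an apolar identity. By the Hilbert--Burch theorem, $I(\bbY)$ is generated by the three maximal minors of $\mathrm{HB}_\bbY$, namely $m_1 = L_1F_2-L_2F_1$ of degree $k+1$ and $m_2 = GL_2-QF_2$, $m_3 = QF_1-GL_1$ of degree $k+2$, and the columns of $\mathrm{HB}_\bbY$ give the two syzygies $Qm_1+L_1m_2+L_2m_3 = 0$ and $Gm_1+F_1m_2+F_2m_3 = 0$. Since $\bbY$ is a decomposition of $g$, \Cref{lem: apolarity} and \Cref{prop: monomial results}(i) give $I(\bbY)\subseteq\Ann(g) = (X^{k+1},Y^{k+1},Z^{k+1})$; the resolution determined by $\mathrm{HB}_\bbY$ shows $I(\bbY)_{k+1} = \langle m_1\rangle$ and $m_2,m_3\notin(m_1)$, and since $\Ann(g)_{k+1} = \langle X^{k+1},Y^{k+1},Z^{k+1}\rangle$ we may write $m_1 = \alpha X^{k+1}+\beta Y^{k+1}+\gamma Z^{k+1}$ with $(\alpha,\beta,\gamma)\ne 0$. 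Finally $\bbY$ is not contained in a line (its ideal has no generator of degree $\le k$), which forces at most one of $\alpha,\beta,\gamma$ to vanish and hence $m_1$ to be squarefree.

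The heart of the proof is to show that $L_1,L_2$ are linearly independent and that $p_0 := V(L_1,L_2)$ lies on $\bbY$. Since $X^{k+1},Y^{k+1},Z^{k+1}$ have no syzygy with coefficients of degree $< k+1$, one has the direct sum $\Ann(g)_{k+2} = X^{k+1}R_1 \oplus Y^{k+1}R_1 \oplus Z^{k+1}R_1$; write $m_2,m_3$ in this decomposition and expand the first syzygy $Qm_1+L_1m_2+L_2m_3 = 0$ componentwise. For $k\ge 2$ the degree-$2$ coefficients of the three components are themselves a syzygy of $X^{k+1},Y^{k+1},Z^{k+1}$ with coefficients of degree $2 < k+1$, hence zero, which yields $\alpha Q,\beta Q,\gamma Q\in(L_1,L_2)$ and therefore $Q\in(L_1,L_2)$. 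If $L_1,L_2$ were linearly dependent, $(L_1,L_2)\subseteq(L)$ for a single linear form $L$, so $L\mid Q$; substituting $Q = LQ'$ into $m_1,m_2,m_3$ gives $I(\bbY)\subseteq(L)$, forcing $\bbY$ onto a line, a contradiction. Hence $L_1,L_2$ are independent, $(L_1,L_2) = I(p_0)$ for the reduced point $p_0$, and $Q(p_0) = 0$. As $L_1(p_0) = L_2(p_0) = Q(p_0) = 0$, every maximal minor of $\mathrm{HB}_\bbY$ vanishes at $p_0$, so $p_0\in\bbY$.

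Set $\bbX := \bbY\setminus\{p_0\}$, a reduced set of $(k+1)^2$ points. Since $Q\in(L_1,L_2)$ and $\deg Q = 2$, write $Q = L_1A_1+L_2A_2$ with $A_1,A_2$ linear, and put $N_2 := G-A_1F_1-A_2F_2$, of degree $k+1$. A direct computation gives $L_1N_2 = -(m_3+A_2m_1)$ and $L_2N_2 = -(m_2+A_1m_1)$, both in $I(\bbY)$; hence $N_2\in I(\bbY):(L_1,L_2) = I(\bbX)$, and $N_2\notin\langle m_1\rangle$ because otherwise $m_2\in(m_1)$, contrary to the above. To see $N_2\in\Ann(g)$, set $w := N_2\contract g\in S^{2k-1}V$. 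From $L_1N_2,L_2N_2\in I(\bbY)\subseteq\Ann(g)$ we get $L_1\contract w = L_2\contract w = 0$, and since $(L_1,L_2) = I(p_0)$ is the ideal of a point $p_0 = [\ell']$ this forces $w$ to be a scalar multiple of $(\ell')^{2k-1}$. On the other hand $X^{k+1}\contract w = N_2\contract(X^{k+1}\contract g) = 0$, and likewise for $Y^{k+1},Z^{k+1}$; but for $k\ge 2$ applying $X^{k+1}$ to a nonzero multiple of $(\ell')^{2k-1}$ gives a nonzero multiple of $(\text{$x$-coefficient of }\ell')^{k+1}(\ell')^{k-2}$, and similarly for $y,z$, so $w\ne 0$ would force $\ell' = 0$. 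Therefore $w = 0$, i.e. $N_2\in\Ann(g)_{k+1} = \langle X^{k+1},Y^{k+1},Z^{k+1}\rangle$.

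Removing the single reduced point $p_0$ raises $\dim I(-)_{k+1}$ by at most one, so $\dim I(\bbX)_{k+1}\le 2$; as it contains $m_1$ and $N_2$ we conclude $I(\bbX)_{k+1} = \langle m_1,N_2\rangle\subseteq\langle X^{k+1},Y^{k+1},Z^{k+1}\rangle$, which is $2$-dimensional. In particular $\bbX$ lies on no line — otherwise $\dim I(\bbX)_{k+1}\ge\binom{k+2}{2} > 2$ — so $\langle m_1,N_2\rangle$ contains none of $X^{k+1},Y^{k+1},Z^{k+1}$; by \Cref{prop: monomial results}(iii) the complete intersection $(m_1,N_2)$ cuts out a reduced decomposition $\bbX'$ of $g$ of length $(k+1)^2$, and $\bbX\subseteq\bbX'$ with $\#\bbX = (k+1)^2 = \#\bbX'$ gives $\bbX = \bbX'$. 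Hence $\bbY = \bbX\cup\{p_0\}$ with $\bbX$ a decomposition of $g$ of length $(k+1)^2$, as claimed. The main obstacle is the vanishing $Q(p_0) = 0$ together with the accompanying identification $N_2\contract g = 0$: the former rests on the absence of low-degree syzygies of $(X^{k+1},Y^{k+1},Z^{k+1})$ (which is exactly where $k\ge 2$ enters) and on carefully matching the direct-sum bookkeeping with the syzygies of $I(\bbY)$; the latter on the structure of the catalecticant $\cat_{k+1}(g)$.
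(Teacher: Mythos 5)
Your proof is correct, and it takes a genuinely different route from the paper's. The paper exploits the stabilizer $\bfT_{\SL}\rtimes\frakS_3$ of $x^ky^kz^k$ to normalize $p_0$ to one of three points, then pins down $F_1,F_2,G$ by explicit coefficient-matching in each case (the dependent-$L_i$ case and case (c) are handled separately), eventually forcing $\lambda=0$ in $Q=\lambda Z^2$ so that the last column splits off the point $p_0$. You instead argue coordinate-freely: the first column syzygy $Qm_1+L_1m_2+L_3m_3=0$, expanded in the direct sum $X^{k+1}R_1\oplus Y^{k+1}R_1\oplus Z^{k+1}R_1$ and combined with the fact that the Koszul syzygies of $(X^{k+1},Y^{k+1},Z^{k+1})$ live in degree $2k+2$, yields $Q\in(L_1,L_2)$ in one stroke (this is exactly where $k\ge 2$ enters, just as in the paper); this simultaneously disposes of the degenerate case $\dim\langle L_1,L_2\rangle=1$ and puts $p_0$ on $\bbY$. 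Your construction of $N_2=G-A_1F_1-A_2F_2$ and the catalecticant argument ($L_1\contract w=L_2\contract w=0$ forces $w\in\langle(\ell')^{2k-1}\rangle$, then $X^{k+1},Y^{k+1},Z^{k+1}$ kill it) replace the paper's case-by-case verification that $G\in\Ann(g)$, and the endgame via \Cref{prop: monomial results}(iii) is the same in spirit. Your version is shorter, avoids the omitted case (c), and makes the structural mechanism transparent; the paper's version has the advantage of producing the explicit normal forms of $F_1,F_2,G$. Two cosmetic points: with your sign convention $m_2=GL_2-QF_2$ one gets $L_2N_2=+(m_2+A_1m_1)$ rather than $-(m_2+A_1m_1)$ (immaterial, since membership in $I(\bbY)$ is all you use), and the final appeal to \Cref{prop: monomial results}(iii) tacitly uses that any $2$-dimensional subspace of $\langle X^{k+1},Y^{k+1},Z^{k+1}\rangle$ avoiding the three coordinate powers is spanned by $X^{k+1}-\alpha_1Y^{k+1}$ and $X^{k+1}-\alpha_2Z^{k+1}$ with $\alpha_1\alpha_2\neq 0$ — true and one line, but worth stating.
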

\begin{proof}
The stabilizer of the monomial $g$ for the action of $\GL_3$ on $V = \langle x ,y,z\rangle$ is the subgroup $G_g = \bfT_{\SL} \rtimes \frakS_3$ where $\bfT_{\SL}$ is the torus of diagonal matrices with determinant $1$ and $\frakS_3$ is the permutation group embedded in $\GL_3$ as permutation matrices. The set of decompositions of a given length is therefore invariant under the action of $G_g$ and properties such as minimality and irredundancy are preserved under this action. Therefore, we may use the action of the stabilizer to normalize a decomposition.

First suppose $L_1,L_2$ are linearly dependent. After performing column operations on the Hilbert-Burch matrix, we may assume $L_2 = 0$, so that $L_1F_2 \in I(\bbY)_{k+1}$. By apolarity, $I(\bbY)_{k+1} \subseteq \langle X^{k+1}, Y^{k+1}, Z^{k+1} \rangle$, therefore, after possibly permuting the variables and rescaling $Z$, we may assume $L_1F_2 = X^{k+1} - Z^{k+1}$, arising from $L_1 = X-Z$ and $F_2 = X^k + X^{k-1}Z + \cdots + Z^k$; indeed notice that the only reducible elements in the linear span $\langle X^{k+1}, Y^{k+1}, Z^{k+1} \rangle$ are the linear combinations of two generators. Moreover, after performing column operations, we may assume that $Q$ does not depend on $Z$. However, $L_2 = 0$ also implies $QF_2 \in  I(\bbY)_{k+2}$: since $k\geq 2$, and $Q$ does not depend on $Z$, we obtain that $Q=0$. This yields a contradiction because if $Q=0$ then the ideal of maximal minors of the Hilbert-Burch matrix would not be defining a finite set of points.

Therefore, we may assume $L_1,L_2$ are linearly independent and set $p_0$ be the point which they define. After possibly acting with $G_g$, assume that $p_0$ is one of the three points $(1,1,1), (0,1,1), (0,0,1)$. After performing column operations on the Hilbert-Burch matrix, we may also assume one of these three cases occur:
\begin{enumerate}[(a)]
\item $L_1 = X-Z, L_2 = Y-Z$, and $p_0 = (1,1,1)$;
\item $L_1 = X, L_2 = Y-Z$ and $p_0 = (0,1,1)$;
\item $L_1 = X, L_2 = Y$ and $p_0 = (0,0,1)$.
\end{enumerate}
We analyze these three cases separately.

\underline{Case (a)}. After performing column operations on the Hilbert-Burch matrix, we may further assume $Q = \lambda \cdot Z^2$ for some $\lambda \in \bbC$. In this way, the Hilbert-Burch matrix has the form
\[
\mathrm{HB}_\bbY = \left[\begin{array}{ccc}
    F_1 & F_2 & G \\
    X-Z & Y-Z & \lambda Z^2
\end{array}\right]
\]
Moreover, after performing row operations, we may assume $F_1$ does not depend on $X$. Consider the minor of degree $k+1$, that is 
\[
E = (Y-Z)F_1 - (X-Z)F_2.
\]
By apolarity, we have that $E \in \langle X^{k+1}, Y^{k+1}, Z^{k+1} \rangle$. Since $ (Y-Z)F_1$ does not depend on $X$, the only term depending on $X$ in $(X-Z)F_2$ must be $X^{k+1}$. In particular, $F_2$ does not depend on $Y$ and all coefficients of the monomials $X^i Z^{k+1-i}$ for $i =1 \vvirg k$ in $(X-Z)F_2$ must vanish. This implies 
\[
F_2 = a \cdot ( X^{k} + X^{k-1}Z + \cdots + X Z^{k-1} + Z^k)
\]
for some $a \in \bbC$. Similarly 
\[
F_1 = b \cdot ( Y^{k} + Y^{k-1}Z + \cdots + Y Z^{k-1} + Z^k).
\]
Now, consider the first minor of degree $k+2$, that is 
\[
H_0 = (X-Z)G - \lambda b Z^2 (Y^{k} + Y^{k-1}Z + \cdots + Z^k);
\]
write $G = \sum_{p=0}^{k+1} G_{k+1-p}(X,Y)Z^p$ where $G_{k+1-p}$ is a homogeneous polynomial of degree $k+1-p$ depending only on $X,Y$. Then 
\begin{align*}
H_0 =   &Z^0[XG_{k+1}(X,Y)] + \\
        &Z^1[XG_k(X,Y)-G_{k+1}(X,Y)] + \\
        &Z^2[XG_{k-1}(X,Y)-G_k(X,Y)  - b\lambda \cdot Y^k] + \mathit{higher \ order \ terms \ in \ Z}.
\end{align*}
By apolarity, $H_0 \in (X^{k+1}, Y^{k+1}, Z^{k+1})$. Since $k \geq 2$ and $(X^{k+1}, Y^{k+1}, Z^{k+1})$ is a monomial ideal, we obtain the three terms in the square brackets belong to $(X^{k+1}, Y^{k+1}, Z^{k+1})$ as well.

In particular, $XG_{k+1}(X,Y) \in (X^{k+1}, Y^{k+1})$ implies that 
\[
G_{k+1} = u_0 X^{k+1} + u_1 X^{k}Y + u_{k+1} Y^{k+1}
\]
for some $u_i \in \bbC$. The condition $XG_k(X,Y)-G_{k+1}(X,Y) \in (X^{k+1}, Y^{k+1})$, together with the form of $G_{k+1}$ implies 
\[
G_k = v_0 X^k + u_1 X^{k-1}Y.
\]
The condition $XG_{k-1}(X,Y)-G_k(X,Y) + a\lambda \cdot Y^k \in (X^{k+1}, Y^{k+1})$ implies 
\[
XG_{k-1}(X,Y)-G_k(X,Y)  - b\lambda  \cdot Y^k = 0
\]
because $(X^{k+1}, Y^{k+1})$ is zero in degree lower than $k+1$; because of the forms we deduced for $G_k$, and since $XG_{k-1}$ is a multiple of $X$, we obtain $b\lambda = 0$.

If $\lambda \neq 0$, then $b = 0$ which implies $F_1 = 0$. The same calculation on the equations $H_1$ implies $a = 0$ and $F_2 = 0$. In this case, the maximal minors of the Hilbert-Burch matrix do not generate a $0$-dimensional ideal, yielding a contradiction.

If $\lambda = 0$, we deduce that the Hilbert-Burch matrix of $\bbY$ has the form 
\[
\mathrm{HB}_\bbY = \left[\begin{array}{ccc}
    F_1 & F_2 & G \\
    X-Z & Y-Z & 0
\end{array}\right];
\]
its ideal of maximal minors is $I(\bbY) = (E,(X-Z)G,(Y-Z)G)$ where $E = (Y-Z)F_1 - (X-Z)F_2$. In particular $p_0 \in \bbY$.

Considering the minors of the Hilbert-Burch matrix, apolarity guarantees 
\begin{align*}
(X-Z)G , \ (Y-Z)G \ \in \ (X^{k+1}, Y^{k+1}, Z^{k+1});
\end{align*}
an argument similar to the one above implies $G \in (X^{k+1}, Y^{k+1}, Z^{k+1})$. In this case, we obtain 
\[
I(\bbY) = (E , (X-Z)G ,(Y-Z)G) = (X-Z, Y-Z) \cap (E,G).
\]
Let $\bbX$ be the zero set defined by the ideal $(E,G)$; since $E,G \in (X^{k+1}, Y^{k+1}, Z^{k+1})$, we have that $\bbX$ is a decomposition of $g$ of length $(k+1)^2$. We conclude $\bbY = \bbX \cup \{p_0\}$ which proves the desired condition.

\underline{Case (b)}.
As before, after performing column operations on the Hilbert-Burch matrix, we may further assume $Q = \lambda \cdot Z^2$ for some $\lambda \in \bbC$. In this way, the Hilbert-Burch matrix has the form
\[
\mathrm{HB}_\bbY = \left[\begin{array}{ccc}
    F_1 & F_2 & G \\
    X & Y-Z & \lambda Z^2
\end{array}\right].
\]
Moreover, after performing row operations, we may assume $F_1$ does not depend on $X$. Consider the minor of degree $k+1$, that is 
\[
E = (Y-Z)F_1 - XF_2.
\]
By apolarity, we have that $E \in (X^{k+1}, Y^{k+1}, Z^{k+1})$. Since $ (Y-Z)F_1$ does not depend on $X$, the only term depending on $X$ in $XF_2$ must be $X^{k+1}$. This guarantees $F_2 = a X^k$ and the same argument as in the previous part guarantees 
\[
F_1 = b \cdot ( Y^{k} + Y^{k-1}Z + \cdots + Y Z^{k-1} + Z^k).
\]
Now, consider the first minor of degree $k+2$, that is 
\[
H_0 = XG - \lambda b Z^2 (Y^{k} + Y^{k-1}Z + \cdots + Z^k);
\]
write $G = \sum_{p=0}^{k+1} G_{k+1-p}(X,Y)Z^p$ where $G_{k+1-p}$ is a homogeneous polynomial of degree $k+1-p$ depending only on $X,Y$. Then 
\begin{align*}
H_0 =   &Z^0[XG_{k+1}(X,Y)] + \\
        &Z^1[XG_k(X,Y)] + \\
        &Z^2[XG_{k-1}(X,Y)- b\lambda \cdot Y^k] + \mathit{higher \ order \ terms}.
\end{align*}
Similarly to the previous case the apolarity condition implies $XG_{k-1}(X,Y)- b\lambda \cdot Y^k = 0$ which implies $b\lambda = 0$. The condition on $H_1$, on the other hand, implies $a\lambda = 0$. As before, if $\lambda \neq 0$, we have $a=b=0$ and we obtain a contradiction because the maximal minors of the Hilbert-Burch matrix would not generate a $0$-dimensional ideal. 

If $\lambda = 0$, we deduce that the Hilbert-Burch matrix of $\bbY$ has the form 
\[
\mathrm{HB}_\bbY = \left[\begin{array}{ccc}
    F_1 & F_2 & G \\
    X  & Y-Z & 0
\end{array}\right];
\]
its ideal of maximal minors is $I(\bbY) = (E,XG,(Y-Z)G)$ where $E = (Y-Z)F_1 - XF_2$. In particular $p_0 \in \bbY$.

Considering the minors of the Hilbert-Burch matrix, apolarity guarantees 
\begin{align*}
&XG, \ (Y-Z)G \ \in  \ (X^{k+1}, Y^{k+1}, Z^{k+1}) ;
\end{align*}
an argument similar to the one above implies $G \in (X^{k+1}, Y^{k+1}, Z^{k+1})$. In this case, we obtain 
\[
I(\bbY) = (E , XG ,(Y-Z)G) = (X, Y-Z) \cap (E,G).
\]
Let $\bbX$ be the zero set defined by the ideal $(E,G)$; since $E,G \in (X^{k+1}, Y^{k+1}, Z^{k+1})$, we have that $\bbX$ is a decomposition of $g$ of length $(k+1)^2$. We conclude $\bbY = \bbX \cup \{p_0\}$ which proves the desired condition.

\underline{Case (c)}. The calculation is similar to the previous cases and the proof is completed.
\end{proof}

\Cref{prop: HB liaison} and~\Cref{lemma: apolar identities} provide the contradiction needed to complete the proof of~\Cref{thm: main overcomplete}.
\begin{proof}[{Proof of~\Cref{thm: main overcomplete}}]
Proceed by contradiction and assume that $g = x^ky^kz^k$ has an irredundant decomposition of length $(k+1)^2+1$. Then~\Cref{prop: HB liaison} characterizes the Hilbert-Burch matrix of such decomposition, and~\Cref{lemma: apolar identities} shows that a decomposition with such Hilbert-Burch matrix is not irredundant. 

This provides a contradiction and completes the proof.
\end{proof}

And finally, we can complete the proof of~\Cref{thm: main}, which is a consequence of~\Cref{thm: main overcomplete} and~\Cref{prop: any vars}.
\begin{proof}[{Proof of~\Cref{thm: main}}]
 If $abc \neq 0$, then~\Cref{prop: any vars} guarantees there is a unique $\lambda$ such that $\rmR(f) = (k+1)^2 - 1$ and in that case the decomposition is unique.

Now suppose $\lambda$ does not coincide with the coefficient $c_1$ of the decomposition of $g = x^ky^kz^k$ in the rank decomposition $\bbX$ containing $\lambda$. Suppose $\bbY$ is a decomposition of $f$ different from $\bbX$. Then $\bbY' = \bbY \cup \{ [\ell]\}$ would define a decomposition of $g$ of length $(k+1)^2 + 1$, where the coefficient of $\ell^{3k}$ is $\lambda \neq 0$. By~\Cref{thm: main overcomplete}, such decomposition cannot be irredundant, but by construction all powers appear with nonzero coefficients. This yields a contradiction.

Now assume $abc = 0$, and without loss of generality assume that either $c = 0$ or $b=c=0$. To prove the lower bound, we use the same argument as above. If $\rmR(f) = (k+1)^2$ then a decomposition $\bbX$ of length $(k+1)^2$, together with the point $[ \ell ]$, defines an irredundant decomposition of the monomial $g$; by~\Cref{prop: monomial results}, there is no minimal decomposition of $g$ containing $\ell$, so $\bbX \cup \{[\ell]\}$ has cardinality $(k+1)^2+1$. This provides an irredundant decomposition of $g$ of length $(k+1)^2 + 1$, in contradiction with~\Cref{thm: main}. 
\end{proof}

\subsection*{Acknowledgments} L.C. is a member of GNSAGA of INdAM. Part of this work was developed while S.M. was a visiting scholar at the Institut de Mathematiques de Toulouse. 

{
\bibliographystyle{alphaurl}
\bibliography{binomials}

\newcommand{\etalchar}[1]{$^{#1}$}
\begin{thebibliography}{BCMT10}

\bibitem[ABC22]{ABC22}
E.~Angelini, C.~Bocci, and L.~Chiantini.
\newblock {Catalecticant intersections and confinement of decompositions of
  forms}.
\newblock {\em J. Symb. Comp.}, 109:220–237, 2022.
\newblock \href {https://doi.org/10.1016/j.jsc.2020.07.003}
  {\path{doi:10.1016/j.jsc.2020.07.003}}.

\bibitem[AC20]{AC20}
E.~Angelini and L.~Chiantini.
\newblock {On the identifiability of ternary forms}.
\newblock {\em Lin. Alg. Appl.}, 599:36–65, 2020.
\newblock \href {https://doi.org/10.1016/j.laa.2020.03.042}
  {\path{doi:10.1016/j.laa.2020.03.042}}.

\bibitem[AC22]{AngCh22Minimality}
E.~Angelini and L.~Chiantini.
\newblock {Minimality and uniqueness for decompositions of specific ternary
  forms}.
\newblock {\em Mathematics of Computation}, 91(334):973–1006, 2022.
\newblock \href {https://doi.org/10.1090/mcom/3681}
  {\path{doi:10.1090/mcom/3681}}.

\bibitem[ACV18]{AngChiVan}
E.~Angelini, L.~Chiantini, and N.~Vannieuwenhoven.
\newblock {Identifiability beyond Kruskal's bound for symmetric tensors of
  degree 4}.
\newblock {\em Rend. Lincei - Mat. e Appl.}, 29(3):465–485, 2018.
\newblock \href {https://doi.org/10.4171/RLM/817} {\path{doi:10.4171/RLM/817}}.

\bibitem[AH95]{AlHir}
J.~Alexander and A.~Hirschowitz.
\newblock {Polynomial interpolation in several variables}.
\newblock {\em J. Alg. Geom.}, 4(2):201–222, 1995.

\bibitem[BBT13]{BBT13}
W.~Buczyńska, J.~Buczyński, and Z.~Teitler.
\newblock {Waring decompositions of monomials}.
\newblock {\em J. Algebra}, 378:45–57, 2013.
\newblock \href {https://doi.org/10.1016/j.jalgebra.2012.12.011}
  {\path{doi:10.1016/j.jalgebra.2012.12.011}}.

\bibitem[BCC{\etalchar{+}}18]{BCCGO}
A.~Bernardi, E.~Carlini, M.~Catalisano, A.~Gimigliano, and A.~Oneto.
\newblock {The Hitchhiker Guide to: Secant Varieties and Tensor Decomposition}.
\newblock {\em Mathematics}, 6(12):314, 2018.
\newblock \href {https://doi.org/10.3390/math6120314}
  {\path{doi:10.3390/math6120314}}.

\bibitem[BCMT10]{BCMT10}
J.~Brachat, P.~Comon, B.~Mourrain, and E.~Tsigaridas.
\newblock {Symmetric tensor decomposition}.
\newblock {\em Lin. Alg. Appl.}, 433(11-12):1851–1872, 2010.
\newblock \href {https://doi.org/10.1016/j.laa.2010.06.046}
  {\path{doi:10.1016/j.laa.2010.06.046}}.

\bibitem[BG21]{BryGes}
T.~Brysiewicz and F.~Gesmundo.
\newblock {The Degree of Stiefel Manifolds}.
\newblock {\em Enum. Comb. Appl.}, 1(3):n.S2R20, 2021.
\newblock \href {https://doi.org/10.54550/ECA2021V1S3R20}
  {\path{doi:10.54550/ECA2021V1S3R20}}.

\bibitem[BGM94]{bigatti1994geometric}
A.~Bigatti, A.~V. Geramita, and J.~C. Migliore.
\newblock {Geometric consequences of extremal behavior in a theorem of
  Macaulay}.
\newblock {\em Trans. Amer. Math. Soc.}, 346(1):203–235, 1994.
\newblock \href {https://doi.org/10.1090/S0002-9947-1994-1272673-7}
  {\path{doi:10.1090/S0002-9947-1994-1272673-7}}.

\bibitem[BM21]{BM:Binomials}
L.~{Brustenga i Moncusí} and S.~K. Masuti.
\newblock {The Waring rank of binary binomial forms}.
\newblock {\em Pacific J. Mathematics}, 313(2):327–342, 2021.
\newblock \href {https://doi.org/10.2140/pjm.2021.313.327}
  {\path{doi:10.2140/pjm.2021.313.327}}.

\bibitem[CCC{\etalchar{+}}18]{CCCGW15}
E.~Carlini, M.~V. Catalisano, L.~Chiantini, A.~V. Geramita, and Y.~Woo.
\newblock {Symmetric tensors: rank, {S}trassen's conjecture and
  e-computability}.
\newblock {\em Ann. Sc. Norm. Super. Pisa Cl. Sci.}, XVIII(1):363–390, 2018.
\newblock \href {https://doi.org/10.2422/2036-2145.201602_004}
  {\path{doi:10.2422/2036-2145.201602_004}}.

\bibitem[CCG12]{CCG12}
E.~Carlini, M.~V. Catalisano, and A.~V. Geramita.
\newblock {The solution to the {W}aring problem for monomials and the sum of
  coprime monomials}.
\newblock {\em J. Algebra}, 370:5–14, 2012.
\newblock \href {https://doi.org/10.1016/j.jalgebra.2012.07.028}
  {\path{doi:10.1016/j.jalgebra.2012.07.028}}.

\bibitem[CCO17]{CCO17}
E.~Carlini, M.~V. Catalisano, and A.~Oneto.
\newblock {Waring loci and the Strassen conjecture}.
\newblock {\em Adv. Math.}, 314:630–662, 2017.
\newblock \href {https://doi.org/10.1016/j.aim.2017.05.008}
  {\path{doi:10.1016/j.aim.2017.05.008}}.

\bibitem[CG24]{ChiGes}
L.~Chiantini and F.~Gesmundo.
\newblock {Decompositions and Terracini loci of cubic forms of low rank}.
\newblock {\em Deformation of Artinian Algebras and Jordan Types, Contemporary
  Mathematics}, vol. 805:139–156, 2024.
\newblock \href {https://doi.org/10.1090/conm/805/16131}
  {\path{doi:10.1090/conm/805/16131}}.

\bibitem[Chi19]{chiantini2019hilbert}
L.~Chiantini.
\newblock {Hilbert functions and tensor analysis}.
\newblock In {\em {Quantum Physics and Geometry}}, page 125–151. Springer,
  2019.
\newblock \href {https://doi.org/10.1007/978-3-030-06122-7_6}
  {\path{doi:10.1007/978-3-030-06122-7_6}}.

\bibitem[CO23]{ChiOtt}
L.~Chiantini and G.~Ottaviani.
\newblock {\em {A footnote to a footnote to a paper of B. Segre}}, page
  93–117.
\newblock Springer, 2023.
\newblock \href {https://doi.org/10.1007/978-3-031-11938-5_4}
  {\path{doi:10.1007/978-3-031-11938-5_4}}.

\bibitem[COV17]{ChiOttVan}
L.~Chiantini, G.~Ottaviani, and N.~Vannieuwenhoven.
\newblock {On generic identifiability of symmetric tensors of subgeneric rank}.
\newblock {\em Trans. Amer. Math. Soc.}, 369(6):4021–4042, 2017.
\newblock \href {https://doi.org/10.1090/tran/6762}
  {\path{doi:10.1090/tran/6762}}.

\bibitem[CS11]{ComaSeigu}
G.~Comas and M.~Seiguer.
\newblock {On the rank of a binary form}.
\newblock {\em Found. Comp. Math.}, 11(1):65–78, 2011.
\newblock \href {https://doi.org/10.1007/s10208-010-9077-x}
  {\path{doi:10.1007/s10208-010-9077-x}}.

\bibitem[Dav85]{Davis}
E.~D. Davis.
\newblock {Complete intersections of codimension $2$ in $\mathbb{P}^r$: the
  Bezout-Jacobi-Segre theorem revisited}.
\newblock {\em Rend. Sem. Mat. Univ. Politec. Torino}, 43(2):333–353, 1985.

\bibitem[Der16]{DerkNuclear}
H.~Derksen.
\newblock {On the nuclear norm and the singular value decomposition of
  tensors}.
\newblock {\em Found. Comp. Math.}, 16(3):779–811, 2016.
\newblock \href {https://doi.org/10.1007/s10208-015-9264-x}
  {\path{doi:10.1007/s10208-015-9264-x}}.

\bibitem[DGI{\etalchar{+}}25]{DGIJL25}
P.~Dutta, F.~Gesmundo, C.~Ikenmeyer, G.~Jindal, and V.~Lysikov.
\newblock {Geometric complexity theory for product-plus-power}.
\newblock {\em J. Symb. Comput.}, 131:102458, 2025.
\newblock \href {https://doi.org/10.1016/j.jsc.2025.102458}
  {\path{doi:10.1016/j.jsc.2025.102458}}.

\bibitem[Eis95]{EisCA}
D.~Eisenbud.
\newblock {\em {Commutative {A}lgebra: with a view toward algebraic geometry}},
  volume 150 of {\em {Graduate Texts in Mathematics}}.
\newblock Springer-Verlag, New York, 1995.

\bibitem[GKT25]{gesmundo2025hilbert}
Fulvio Gesmundo, Leonie Kayser, and Simon Telen.
\newblock {Hilbert functions of chopped ideals}.
\newblock {\em Journal of Algebra}, 666:415–445, 2025.
\newblock \href {https://doi.org/10.1016/j.jalgebra.2024.11.017}
  {\path{doi:10.1016/j.jalgebra.2024.11.017}}.

\bibitem[GM19]{GalMel}
F.~Galuppi and M.~Mella.
\newblock {Identifiability of homogeneous polynomials and Cremona
  transformations}.
\newblock {\em J. Reine Angew. Math.}, 2019(757):279–308, 2019.
\newblock \href {https://doi.org/10.1515/crelle-2017-0043}
  {\path{doi:10.1515/crelle-2017-0043}}.

\bibitem[HL13]{HilLim:TensorsNPhard}
C.~J. Hillar and L.-H. Lim.
\newblock {Most tensor problems are NP-hard}.
\newblock {\em J. of the ACM}, 60(6):1–39, 2013.
\newblock \href {https://doi.org/10.1145/2512329} {\path{doi:10.1145/2512329}}.

\bibitem[Kum19]{Kum19}
M.~Kumar.
\newblock {A quadratic lower bound for homogeneous algebraic branching
  programs}.
\newblock {\em computational complexity}, 28:409–435, 2019.
\newblock \href {https://doi.org/10.1007/s00037-019-00186-3}
  {\path{doi:10.1007/s00037-019-00186-3}}.

\bibitem[Lee16]{Lee16}
H.~Lee.
\newblock {Power sum decompositions of elementary symmetric polynomials}.
\newblock {\em Lin. Alg. Appl.}, 492:89–97, 2016.
\newblock \href {https://doi.org/10.1016/j.laa.2015.11.018}
  {\path{doi:10.1016/j.laa.2015.11.018}}.

\bibitem[Mig98]{MigLiaison}
J.~Migliore.
\newblock {\em {Introduction to liaison theory and deficiency modules}}, volume
  165.
\newblock Springer Science \& Business Media, 1998.

\bibitem[MM13]{MasMelVSPS}
A.~Massarenti and M.~Mella.
\newblock {Birational aspects of the geometry of varieties of sums of powers}.
\newblock {\em Advances in Mathematics}, 243:187–202, 2013.
\newblock \href {https://doi.org/10.1016/j.aim.2013.04.006}
  {\path{doi:10.1016/j.aim.2013.04.006}}.

\bibitem[RHST24]{RHST24}
A.~Ribot, E.~Horobet, A.~Seigal, and E.~Teixeira Turatti.
\newblock {Decomposing tensors via rank-one approximations}.
\newblock {\em arXiv:2411.15935}, 2024.

\bibitem[RS00]{RanSch:VSPs}
K.~Ranestad and F.-O. Schreyer.
\newblock {Varieties of sums of powers}.
\newblock {\em J. Reine Angew. Math.}, 525:147–181, 2000.
\newblock \href {https://doi.org/10.1515/crll.2000.064}
  {\path{doi:10.1515/crll.2000.064}}.

\bibitem[RV17]{RanVoiVSPS}
K.~Ranestad and C.~Voisin.
\newblock {Variety of power sums and divisors in the moduli space of cubic
  fourfolds}.
\newblock {\em Documenta Mathematica}, 22:455–504, 2017.
\newblock \href {https://doi.org/10.4171/DM/571} {\path{doi:10.4171/DM/571}}.

\bibitem[Shi16]{Shitov16}
Y.~Shitov.
\newblock {How hard is the tensor rank?}
\newblock {\em arXiv:1611.01559}, 2016.

\bibitem[Syl52]{Sylv}
J.~J. Sylvester.
\newblock {On the principles of the calculus of forms}.
\newblock {\em Cambridge and Dublin Math. J.}, 7:52–97, 1852.

\bibitem[TW15]{TeitWoo}
Z.~Teitler and A.~Woo.
\newblock {Power sum decompositions of defining equations of reflection
  arrangements}.
\newblock {\em J. Alg. Combinatorics}, 41(2):365–383, 2015.
\newblock \href {https://doi.org/10.1007/s10801-014-0539-0}
  {\path{doi:10.1007/s10801-014-0539-0}}.

\bibitem[Wol20]{Wolsey}
L.~A. Wolsey.
\newblock {\em {Integer programming}}.
\newblock John Wiley \& Sons, second edition, 2020.

\end{thebibliography}
}

\end{document}